\theoremstyle{plain}
\newtheorem{thm}{Theorem}[section]
\newtheorem{lem}[thm]{Lemma}
\theoremstyle{remark}
\newtheorem{rem}[thm]{Remark}
\theoremstyle{definition}
\newtheorem{ex}[thm]{Example}
\newtheorem{assum}[thm]{Assumption}
\numberwithin{equation}{section}
\numberwithin{figure}{section}
\numberwithin{table}{section}
\title{\Large\bf Convergence of meshfree collocation \\
methods for fully nonlinear parabolic equations}
\author{Yumiharu Nakano\\[1em]
        \small{Department of Mathematical and Computing Science, School of Computing} \\
        \small{Institute of Science Tokyo} \\
        \small{2-12-1 W8-28 Ookayama 152-8550, Tokyo, Japan} \\
		\small{e-mail: nakano@comp.isct.ac.jp}
}
\date{\today}
\begin{document}

\maketitle

\begin{abstract}
We prove the convergence of meshfree collocation methods for 
the terminal value problems of fully 
nonlinear parabolic partial differential equations in the framework of 
viscosity solutions, provided that the basis function 
approximations of the terminal condition and the nonlinearities 
are successful at each time step. 
A numerical experiment with a radial basis function  
demonstrates the convergence property. 

\begin{flushleft}
{\bf Key words}: 
meshfree methods, parabolic equations, viscosity solutions, 
radial basis functions.
\end{flushleft}
\begin{flushleft}
{\bf AMS MSC 2010}: 
35K55, 65M70.
\end{flushleft}
\end{abstract}





\section{Introduction}\label{sec:1}

In this paper, we are concerned with the numerical methods 
for the terminal value problems of the parabolic partial differential equations: 
\begin{equation}
\label{eq:1.1}
\left\{
\begin{split}
 &-\partial_t v + F(t,x,v(t,x),Dv(t,x),D^2v(t,x))=0, \quad (t,x)\in 
 [0,T)\times \mathbb{R}^d, \\
 &v(T,x)=f(x), \quad x\in\mathbb{R}^d, 
\end{split}
\right. 
\end{equation}
where $F:[0,T]\times\mathbb{R}^d\times\mathbb{R}\times\mathbb{R}^d\times
\mathbb{S}^d\to\mathbb{R}$, and 
$\mathbb{S}^d$ stands for the totality of symmetric $d\times d$ real matrices. 
Here we have denoted  
by $\partial_t$ the partial differential 
operator with respect to the time variable $t$,  
by $D^j\equiv D^j_x$ the $j$-th order partial differential 
operator with respect to the space variable $x$. 
The conditions imposed on the function $F$ are described in 
Section \ref{sec:2} below. 
The terminal value problem (\ref{eq:1.1}) mainly appears from 
probabilistic problems. In linear cases the solution to (\ref{eq:1.1}) is given by 
the expectation of a diffusion process, 
whereas in nonlinear cases of Hamilton-Jacobi-Bellman type, the 
solution is given by the value function of a stochastic control problem. 

Existing numerical methods applicable to (\ref{eq:1.1}) are the finite difference methods 
(see, e.g., Kushner and Dupuis \cite{kus-dup:2001} and 
Bonnans and Zidani \cite{bon-zid:2003}), the finite-element like methods 
(see, e.g., Camilli and Falcone \cite{cam-fal:1995} and 
Debrabant and Jakobsen \cite{deb-jak:2012}), 
and the probabilistic methods (see, e.g., Pag{\`e}s et al.~\cite{pag-pha-pri:2004}, 
Fahim et al.~\cite{fah-tou-war:2011} and Nakano \cite{nak:2014b}). 
It should be mentioned that 
these methods have difficulties in applying to the problems with high-dimensional 
state space, which appear as an application of (\ref{eq:1.1}). 
For examples, in the finite difference methods, 
the diffusion matrix in the Hamiltonian should basically be diagonally dominant
for ensuring its convergence 
(see, e.g., \cite{kus-dup:2001}). 
Also, the finite-element like methods require the interpolation of the solutions 
in the state space that preserve a monotonicity condition, 
and need involved computational procedures 
for the implementation in high-dimensional problems 
(see Carlini et al.~\cite{car-fal-fer:2004}). 

An another possible approach to (\ref{eq:1.1}) is to use 
the meshfree collocation method proposed by Kansa \cite{kan:1990b}. 
In this method, we seek an approximate solution 
of the form of a linear combination of a radial basis function 
(e.g., multiquadrics in the Kansa's original work). Substituting this form into 
a partial differential equation leads to an equation for the collocation points. 
Then the approximate solution is constructed by the meshfree interpolation 
of these collocation points. In general,  
this procedure allows for a simpler numerical implementation compared to the 
finite-element like methods, and it needs 
less computational time compared to the probabilistic methods. 
As for the convergence, 
rigorous analyses have been done for linear equations. 
See Chapter 15 in Wendland \cite{wen:2010}, 
Schaback \cite{sch:2010}, Lee et al.~\cite{lee-etal:2009}, 
Ling and Schaback \cite{lin-sch:2008}, 
and the references therein. In nonlinear cases,  
Huang et al.~\cite{hua-etal:2006} numerically shows the convergence in the case of  
a Hamilton-Jacobi-Bellman equation of the first order, a special case of (\ref{eq:1.1}). 
However, to the best of our knowledge, the rigorous convergence issue 
for the nonlinear parabolic equations (\ref{eq:1.1}) has not been addressed in the literature.

In this paper, we present a generalization of Kansa's collocation method 
and prove its rigorous convergence for 
the nonlinear parabolic equations (\ref{eq:1.1}). 
In doing so, we consider solutions of (\ref{eq:1.1}) in the viscosity sense 
since the smoothness of solutions cannot be expected in our nonlinear cases. 
In this framework, it is known that the abstract method proposed by 
Barles and Souganidis \cite{bar-sou:1991} is a powerful tool for checking 
the convergence of a given family of functions to a unique viscosity solution. 
Roughly speaking, if an operator that constructs the possible approximate solution 
 has monotonicity, stability, and consistency properties, then by the arguments in 
\cite{bar-sou:1991} we can basically prove its convergence.  
In our case, however, this technique cannot be applied in a trivial way 
since the collocation method includes the derivative terms and thus 
violates the monotonicity condition.  
We find that a key to overcoming this difficulty is 
Lemma 4.1 in Kohn and Serfaty \cite{koh-ser:2010}. 
Using this lemma, they show that an approximation scheme with a max-min representation 
has the consistency property. The statement of this lemma, however, suggests that 
its converse is also true, i.e.,  
every smooth consistent method has the max-min representation with a negligible term 
and so has the monotonicity in an approximation sense,  
since their max-min representation is approximately monotone.  
Therefore our task is to justify this observation in our situation. 

The present paper is organized as follows. 
In Section \ref{sec:2}, we briefly review the meshfree interpolation theory and 
derive a general collocation method for (\ref{eq:1.1}). 
We rigorously state our assumptions and prove the convergence property 
in Section \ref{sec:3}. 
Section \ref{sec:4} exhibits a numerical example.

\section{Generalization of Kansa's method}\label{sec:2}

Throughout this paper, for $a=(a_i)\in\mathbb{R}^{\ell}$ and 
$\tilde{a}\in\mathbb{R}^{\ell_1\times\ell_2}$, 
we write $|a|=(\sum_{i=1}^{\ell}a_{i}^2)^{1/2}$ and 
$|\tilde{a}|=\sup_{y\in\mathbb{R}^{\ell_2}\setminus\{0\}}|\tilde{a}y|/|y|$, respectively. 
We denote by $a^{\mathsf{T}}$ the transpose of a vector or matrix $a$. 
By $C$ we denote positive constants that may not be necessarily 
 equal with each other. 
We also write $C_{\kappa_1,\ldots,\kappa_{\ell}}$ for 
a positive constant $C$ depending only on parameters
$\kappa_1,\ldots,\kappa_{\ell}$. 
For a multiindex $\alpha=(\alpha_1,\ldots,\alpha_d)$ of nonnegative integers and 
a function $u$, we define $D^{\alpha}u(x)$ by the usual manner, i.e.,  
\begin{equation*}
 D^{\alpha}u(x)=\frac{\partial^{|\alpha|_1}u(x)}
  {\partial x_1^{\alpha_1}\cdots\partial x_d^{\alpha_d}}
\end{equation*}
with $|\alpha|_1=\alpha_1+\cdots +\alpha_d$. 
For $m\in\mathbb{N}\cup\{0\}$ we denote by $\Pi_m(\mathbb{R}^{\ell})$ 
the set of all $\mathbb{R}^{\ell}$-valued polynomial of degree at most $m$.

In this section, we describe a meshfree collocation method for 
(\ref{eq:1.1}), which is a generalization of Kansa's method in the 
parabolic cases. 
First, we briefly review the basis of the interpolation theory with 
conditionally positive definite kernels. 
We refer to Wendland \cite{wen:2010} for a complete account. 
In general, a meshfree method seeks an approximate function 
in the space spaned by a prespecified kernel. 
As the kernel we consider a smooth, symmetric conditionally positive definite kernel   
$\Phi: \mathbb{R}^d\times\mathbb{R}^d\to \mathbb{R}$ of order $m$. 
More precisely, $\Phi$ is assumed to satisfy the following: 
\begin{enumerate}[\rm (i)]
\item $\Phi\in C^{2\nu}(\mathbb{R}^d\times\mathbb{R}^d)$ for some 
 $\nu\ge 2$;    
\item $\Phi(x,y)=\Phi(y,x)$ for $x,y\in\mathbb{R}^d$;  
\item for every $\ell\in\mathbb{N}$, for all pairwise distinct 
$y_1,\ldots, y_{\ell}\in\mathbb{R}^{d}$ and for all 
$\alpha\in\mathbb{R}^{\ell}\setminus\{0\}$ satisfying  
\begin{equation}
\label{eq:2.1}
  \sum_{j=1}^{\ell}\alpha_j\pi(y_j)=0, \quad \pi\in\Pi_{m-1}(\mathbb{R}^d), 
\end{equation}
we have 
\begin{equation}
\label{eq:2.2}
  \sum_{i,j=1}^{\ell}\alpha_i\alpha_j\Phi(y_i,y_j)>0. 
\end{equation}
\end{enumerate}
If (\ref{eq:2.2}) holds without (\ref{eq:2.1}), then 
$\Phi$ is called a positive definite kernel. 

\begin{ex}
Here are some examples of the conditionally positive definite kernels. 
In each case, $\Phi$ is given by $\Phi(x,y)=\phi(|x-y|)$, where 
$\phi: [0,\infty)\to\mathbb{R}$, called a radial basis function (RBF). 
\begin{enumerate}[\rm (i)]
 \item Gaussian RBF: $\phi(r)=e^{-\alpha r^2}$, $r\ge 0$, 
  with $\alpha>0$. In this case, $\Phi$ is positive definite. 
 \item multiquadric RBF: $\phi(r)=(\alpha^2+r^2)^{\beta}$, $r\ge 0$, 
  with $\alpha\in\mathbb{R}$, 
  $\beta\in\mathbb{R}\setminus(\mathbb{N}\cup\{0\})$.  
  In this case, $\Phi$ is positive definite for $\beta<0$. 
\end{enumerate}
\end{ex}

Let $\Omega$ be a bounded open subset of $\mathbb{R}^d$. 
Suppose that we are in a position to compute a numerical solution of (\ref{eq:1.1}) on 
$\Omega$. Then assume that $\Omega$ satisfies an  
interior cone condition, i.e., 
there exists $\theta\in (0,\pi/2)$ and $r>0$ such that 
for any $x\in\Omega$, 
$$
 C(x,\zeta(x),\theta,r):=\left\{x+\lambda y: y\in\mathbb{R}^d, \; |y|=1, \; 
 y^{\mathsf{T}}\zeta(x)\ge \cos\theta, \; \lambda\in [0,r]\right\} 
 \subset\Omega
$$
holds for some $\zeta(x)\in\mathbb{R}^d$ with $|\zeta(x)|=1$.   

Let $X=\{x^{(1)},\cdots,x^{(N)}\}$ be a set of pairwise distinct points in 
$\Omega$.    
Let $\pi_1,\ldots,\pi_Q$ be a basis of $\Pi_{m-1}(\mathbb{R}^d)$, 
where $Q=\mathrm{dim}(\Pi_{m-1}(\mathbb{R}^d))=(m+d)!/(m!d!)$. 
Denote $P=(\pi_k(x^{(j)}))\in\mathbb{R}^{N\times Q}$ 
and $A_{\Phi,X}=\{\Phi(x^{(i)},x^{(j)})\}_{1\le i,j\le N}$. 
We assume that $X$ is a $\Pi_{m-1}(\mathbb{R}^d)$-unisolvent 
set, i.e., $\pi\in\Pi_{m-1}(\mathbb{R}^d)$ with $\pi(x)=0$ on $X$ must be zero 
polynomial. 
Then, it follows from \cite[Theorem 8.21]{wen:2010} that the system 
\begin{equation}
\label{eq:2.3}
\left(
\begin{matrix}
A_{\Phi,X} & P \\
P^{\mathsf{T}} & 0
\end{matrix}
\right)
\left(
\begin{matrix}
\xi \\ \eta
\end{matrix}\right)
= 
\left(
\begin{matrix}
b \\ 0
\end{matrix}
\right)
\end{equation}
has a unique solution $(\xi(b),\eta(b))\in\mathbb{R}^N\times\mathbb{R}^Q$ 
for any $b\in\mathbb{R}^N$. 
Thus, the function 
\begin{equation*}
I_{g,X}(x) = \sum_{j=1}^N\xi_j(g|_X)\Phi(x,x^{(j)}) 
 + \sum_{i=1}^Q\eta_i(g|_X)\pi_i(x), \quad x\in\Omega, 
\end{equation*}
that interpolates $g$ on $X$ becomes an approximation of $g$. 
Here, $\xi(b)=(\xi_1(b),\ldots,\xi_N(b))^{\mathsf{T}}$, 
$\eta(b)=(\eta_1(b),\ldots,\eta_Q(b))^{\mathsf{T}}$ for $b\in\mathbb{R}^N$, 
and we have set $g|_X=(g(x^{(1)}),\ldots,g(x^{(N)}))^{\mathsf{T}}$. 

\begin{rem}
\label{rem:2.3}
If $\Phi$ is positive definite, then 
the matrix $A_{\Phi,X}$ is invertible and for $b\in\mathbb{R}^N$ 
the solution of (\ref{eq:2.3}) is given by 
\begin{equation*}
 \xi(b) = A_{\Phi,X}^{-1}b, \quad \eta(b)=0. 
\end{equation*}
In particular, we can drop the polynomial term in the interpolation. 
\end{rem}
Next we recall the error estimation results for interpolation by 
conditionally positive definite kernels. 
Let $\mathcal{N}_{\Phi}(\Omega)$ be the native space corresponding to 
$\Phi$. See \cite{wen:2010} for a precise definition. 
Here, we remark that $\mathcal{N}_{\Phi}(\Omega)$ is a linear subspace of 
$C(\Omega)$ equipped with 
a semi-inner product $(\cdot,\cdot)_{\mathcal{N}_{\Phi}(\Omega)}$. 
If $g,g^{\prime}\in C(\Omega)$ are of the form 
\begin{equation*}
 g(x)=\sum_{j=1}^M\alpha_j\Phi(x,y_j), \quad 
 g^{\prime}(x)= \sum_{j=1}^M\alpha_j^{\prime}\Phi(x,y_j^{\prime}), 
 \quad x\in\Omega, 
\end{equation*}
where 
$M,M^{\prime}\in\mathbb{N}$, 
$\alpha,\alpha^{\prime}\in\mathbb{R}^N$, 
$y_1,\ldots,y_M, y_1^{\prime},\ldots,y^{\prime}_{M^{\prime}}\in\Omega$, 
with $\sum_{j=1}^M\alpha_j\pi(y_j)=
\sum_{j=1}^{M^{\prime}}\alpha_j^{\prime}\pi(y_j^{\prime})=0$  
for all $\pi\in\Pi_{m-1}(\mathbb{R}^d)$, then  
$$
 (g,g^{\prime})_{\mathcal{N}_{\Phi}(\Omega)}= 
 \sum_{j=1}^M\sum_{\ell=1}^{M^{\prime}}\alpha_j\alpha_{\ell}^{\prime}
 \Phi(y_j,y_{\ell}^{\prime}). 
$$

\begin{ex}
Suppose that $\Phi$ is given by $\Phi(x,y)=\phi(|x-y|)$ where 
$\phi$ is some function on $[0,\infty)$ such that   
$x\mapsto \phi(|x|)$ is integrable and has a Fourier transform that 
decays as $(1+|\cdot|^2)^{-k}$, $k\in\mathbb{N}$, $k>d/2$. 
Suppose moreover that 
$\Omega$ has a Lipschitz boundary.  
Then $\mathcal{N}_{\Phi}(\Omega)$ coincides with the $L^2$-Sobolev 
space on $\Omega$ of order $k$ with equivalent norms. 
\end{ex}

The error of the interpolation is estimated as follows: 
for every $g\in\mathcal{N}_{\Phi}(\Omega)$ and every multiindex 
$\alpha$ with $|\alpha|\le \nu$, 
\begin{equation}
\label{eq:2.4}
 |D^{\alpha}g(x)-D^{\alpha}I_{g,X}(x)|\le 
 C_{\nu,\Phi}\Delta^{\nu-|\alpha|}_{X,\Omega}|g|_{\mathcal{N}_{\Phi}(\Omega)}, 
 \quad x\in\Omega,   
\end{equation}
where $|\cdot|_{\mathcal{N}_{\Phi}(\Omega)}
=(\cdot,\cdot)_{\mathcal{N}_{\Phi}(\Omega)}^{1/2}$ and 
$\Delta_{\Omega,X}$ is the fill distance defined by 
$$
 \Delta_{\Omega,X} = \sup_{x\in\Omega}\min_{j=1,\ldots,N}
  |x-x^{(j)}|. 
$$

\begin{rem}
In the above, we have assumed that $\Omega$ satisfies an interior cone condition 
and $X$ is $\Pi_{m-1}(\mathbb{R}^d)$-unisolvent. 
Typical examples are the cases that $\Omega$ is star-shaped 
(see Proposition 11.26 in \cite{wen:2010}) and  
$X$ is a set of uniformly spaced grid points in $\Omega$ with $N\ge m$.  
\end{rem}

Now, let us describe the meshfree collocation methods for our parabolic equations. 
We start with the formal time discretization of (\ref{eq:1.1}) to get  
\begin{equation}
\label{eq:2.5}
 \frac{v(t_{k+1},x)-v(t_k,x)}{h} \simeq\theta F(t_{k+1},x; v(t_{k+1},\cdot)) 
  + (1-\theta)F(t_{k},x;v(t_k,\cdot)) 
\end{equation}
where $t_k=kh$, $k=0,\ldots,n$ and $h=T/n$, $\theta\in [0,1]$, and for 
any $\varphi\in C^2(\mathbb{R}^d)$ 
$$
 F(t,x;\varphi) = F(t,x,\varphi(x), D\varphi(x),D^2\varphi(x)), 
  \quad x\in\mathbb{R}^d. 
$$
Let us denote by $v_{k,j}$, $k=0,\ldots,n$, $j=1,\ldots,N$, 
an approximate solution of (\ref{eq:1.1}) at $\{t_0,\ldots,t_n\}\times X$, to be determined below, 
and set $v^h(t_k,\cdot)$ by the meshfree interpolation of $\{v_{k,j}\}_{j=1,\ldots,N}$, 
i.e., 
\begin{equation}
\label{eq:2.5.5}
 v^h(t_k,x)=\sum_{j=1}^N\xi_{j}(v_k)\Phi(x,x^{(j)}) 
  + \sum_{\ell=1}^Q\eta_{\ell}(v_k)\pi_{\ell}(x), \quad x\in\Omega, 
\end{equation}
where $v_k = (v_{k,1},\ldots,v_{k,N})^{\mathsf{T}}$. 
Moreover, assume that $v^h$ satisfies (\ref{eq:2.5}) with equality on $X$. 
Then, 
\begin{equation*}
 v_{k+1,j}-v_{k,j}=h\theta\tilde{F}_{k+1,j}(v_{k+1}) 
  +h(1-\theta)\tilde{F}_{k,j}(v_k), \quad k=0,\ldots,n-1, \;\; 
   j=1,\ldots,N.  
\end{equation*}
Here, $\tilde{F}_{k,j}(v_k) = F(t_k,x^{(j)}; v^h(t_k,\cdot))$. 
Thus, denoting 
$\tilde{F}_{k}(v_k)=(\tilde{F}_{k,1}(v_k),\ldots,\tilde{F}_{k,N}(v_k))^{\mathsf{T}}$, 
we get 
\begin{equation}
\label{eq:2.6}
 v_k +h(1-\theta)\tilde{F}_k(v_k) = v_{k+1}-h\theta\tilde{F}_{k+1}(v_{k+1}), 
 \quad k=0,\ldots,n-1.   
\end{equation}
The terminal condition $v^h(t_n,\cdot)$ is given by  
\begin{equation}
\label{eq:2.6.5}
v^h(t_n,x)=I_{f,X}(x), \quad x\in\Omega.
\end{equation}
Consequently, our method is described as follows: determine values of grid points 
$\{t_0,\ldots,t_n\}\times X$ by solving the equation (\ref{eq:2.6}) with (\ref{eq:2.6.5}). 
Then define the function $v^h$ on $\{t_0,\ldots,t_n\}\times \Omega$ by (\ref{eq:2.5.5}), 
which is a candidate of an approximate solution of (\ref{eq:1.1}).

\begin{rem}
\label{rem:2.5}
The linearity of $(\xi(b),\eta(b))$ with respect to $b$ yields 
\begin{equation}
\label{eq:2.7}
 v^h(t_k,x) = v^h(t_{k+1},x) - h(1-\theta)I_{F(t_k,\cdot ; v^h(t_k,\cdot)),X}(x) 
  - h\theta I_{F(t_{k+1},\cdot ; v^h(t_{k+1},\cdot)),X}(x), 
   \quad x\in\Omega. 
\end{equation}
In the case of $\theta=1$, the equation (\ref{eq:2.6}) becomes a simple 
recursion formula, and then $v^h$ is computed by 
the repeated interpolation procedures, i.e., 
\begin{equation*}
 v^h(t_k,x) = v^h(t_{k+1},x) 
  - hI_{F(t_{k+1},\cdot ; v^h(t_{k+1},\cdot)),X}(x), 
   \quad x\in\Omega. 
\end{equation*}
\end{rem}

\section{Convergence}\label{sec:3}

This section is devoted to the proof of convergence of $v^h$ 
constructed in the previous section. 
As stated in the introduction, our main tool is the viscosity solution method in 
\cite{bar-sou:1991}. 
To this end, first we recall the notion of the viscosity solution and 
describe our standing assumptions for \eqref{eq:1.1}. 

An $\mathbb{R}$-valued, upper-semicontinuous function 
$u$ on $[0,T]\times\mathbb{R}^d$ is said to be a viscosity subsolution of 
(\ref{eq:1.1}) if the following two conditions hold: 
\begin{itemize}
 \item[(i)] for every $(t,x)\in [0,T)\times\mathbb{R}^d$ and 
every smooth function $\varphi$ such that $u-\varphi$ has a local 
maximum at $(t,x)$ we have 
\begin{equation*}
 -\partial_t\varphi(t,x)+F(t,x,u(t,x),D\varphi(t,x),
  D^2\varphi(t,x))\le 0; 
\end{equation*}
 \item[(ii)] $u(T,x)\le f(x)$, $x\in\mathbb{R}^d$. 
\end{itemize}
Similarly, an $\mathbb{R}$-valued, lower-semicontinuous function 
$u$ on $[0,T]\times\mathbb{R}^d$ is said to be a viscosity supersolution of 
(\ref{eq:1.1}) if the following two condions hold: 
\begin{itemize}
 \item[(i)] for every $(t,x)\in [0,T)\times\mathbb{R}^d$ and 
every smooth function $\varphi$ such that $u-\varphi$ has a local 
minimum at $(t,x)$ we have 
\begin{equation*}
 -\partial_t\varphi(t,x)+F(t,x,u(t,x),D\varphi(t,x),
   D^2\varphi(t,x))\ge 0;  
\end{equation*}
 \item[(ii)] $u(x)\ge f(x)$, $x\in\mathbb{R}^d$. 
\end{itemize}
We say that $u$ is a viscosity solution of (\ref{eq:1.1}) if 
it is both a viscosity subsolution and a viscosity supersolution 
of (\ref{eq:1.1}). 

We consider the terminal value problem (\ref{eq:1.1}) 
under the following assumptions: 
\begin{assum}
\label{assum:3.1}
\begin{enumerate}
 \item For $t\in [0,T]$, $x\in\mathbb{R}^d$, $z\in\mathbb{R}$, 
$p\in\mathbb{R}^d$, and $\Gamma,\Gamma^{\prime}\in\mathbb{S}^d$ with 
$\Gamma\ge \Gamma^{\prime}$, 
  \begin{equation*}
   F(t,x,z,p,\Gamma)\le F(t,x,z,p,\Gamma^{\prime}).  
  \end{equation*}
\item There exist a continuous function $F_0$ on 
$[0,T]\times\mathbb{R}^d\times\mathbb{R}$ 
and a constant $K_0\in (0,\infty)$ such that 
  \begin{equation*}
   |F(t,x,z,p,\Gamma)- F(t^{\prime},x^{\prime},z^{\prime},p^{\prime},\Gamma)|\le 
   |F_0(t,x,z)-F_0(t^{\prime},x^{\prime},z^{\prime})| 
    + K_0(|p-p^{\prime}|+|\Gamma-\Gamma^{\prime}|)   
  \end{equation*}
for $t,t^{\prime}\in [0,T]$, $x,x^{\prime}\in\mathbb{R}^d$, $z,z^{\prime}\in\mathbb{R}$, 
$p,p^{\prime}\in\mathbb{R}^d$, and
	  $\Gamma,\Gamma^{\prime}\in\mathbb{S}^d$. 
\item There exists a constant $K_1\in (0,\infty)$ such that 
\begin{equation*}
|F(t,x,z,p,\Gamma)|\le K_1(1+|z|+|p|+|\Gamma|)
\end{equation*}
for $t\in [0,T]$, $x\in\mathbb{R}^d$, $z\in\mathbb{R}$, 
$p\in\mathbb{R}^d$, and $\Gamma\in\mathbb{S}^d$. 
\item The function $f$ is continuous and bounded on $\mathbb{R}^d$. 
\end{enumerate}
\end{assum}

We assume that the following comparison principle holds: 
\begin{assum}
\label{assum:3.2}
For every bounded, upper-semicontinuous viscosity subsolution $u$ 
of (\ref{eq:1.1}) 
and bounded lower-semicontinuous viscosity supersolution $w$ of 
(\ref{eq:1.1}), we have 
\begin{equation*}
 u(t,x)\le w(t,x), 
 \quad (t,x)\in [0,T]\times\mathbb{R}^d. 
\end{equation*}
\end{assum}

Under Assumptions \ref{assum:3.1} and \ref{assum:3.2}, there exists a unique continuous 
viscosity solution $v$ of (\ref{eq:1.1}). See \cite{koh-ser:2010}. 

\begin{rem}
It is worth to mention that a wide class of Hamilton-Jacobi-Bellman equations satisfies Assumptions 
\ref{assum:3.1} and \ref{assum:3.2}. Indeed, it can be checked that 
\begin{equation*}
 F(t,x,z,p,\Gamma):=\sup_{a\in A}\left\{-b(t,x,a)^{\mathsf{T}}p-\frac{1}{2}\mathrm{tr}
 ((\sigma\sigma^{\mathsf{T}})(t,x,a)\Gamma\right\} 
\end{equation*}
satisfies Assumption \ref{assum:3.1} provided that 
$A$ is a compact subset of some Euclidean space, the functions 
$b$ and $\sigma$ are bounded and Lipschitz continuous in $x$. 
In this case, it is known that Assumption \ref{assum:3.2} is also satisfied. 
See Theorem 9.1 in Fleming and Soner \cite{fle-son:2006}. 
\end{rem}

\begin{assum}
\label{assum:3.3}
The equation (\ref{eq:2.6}) has a unique solution $v_{k,j}$, 
$k=0,\ldots,n-1$, $j=1,\ldots,N$. 
\end{assum}
Notice that Assumption \ref{assum:3.3} is trivially satisfied when $\theta=1$. 

Set $L_N=|A_{\Phi,X}^{-1}|$ if $\Phi$ is simply positive definite and 
$L_N=|\tilde{A}_{\Phi,X}^{-1}|$ if $\Phi$ is conditionally positive definite of 
order $m\ge 1$, where $\tilde{A}_{\Phi,X}$ denotes the matrix on the 
left-hand side in (\ref{eq:2.3}). 

Hereafter, we assume that 
the number $N$ of data sites is a function of the time step $h$. 
To control the bound of $v^h$, we make the following assumptions: 
\begin{assum}
\label{assum:3.4}
The function $L_N$ of $h$ is bounded away from zero and 
there exists $K_3\in (0,\infty)$, $\delta\in (0,1/5)$, 
$h_0\in (0,1)$ such that 
\begin{equation*}
 h^{\delta}\sqrt{N}L_N
 \exp(\sqrt{3}TK_1K_2(1+\sqrt{N})L_N)\le K_3, \quad h\le h_0,  
\end{equation*}
where 
$$
 K_2= \max\left\{\left(\sum_{|\alpha|\le 3}
 \max_{x,y\in\Omega}|D^{\alpha}\Phi(x,y)|^2\right)^{1/2}, 
  \left(\sum_{|\alpha|\le 3}
 \max_{x\in\Omega}\sum_{\ell=1}^Q|D^{\alpha}\pi_{\ell}(x)|^2\right)^{1/2}
 \right\}. 
$$
Here, $D^{\alpha}\Phi(x,y)$ is interpreted as the partial derivative of 
$\Phi$ with respect to the first argument.  
\end{assum}
To discuss Assumption \ref{assum:3.4}, recall that 
the set $X$ of data sites is said to be quasi-uniform with respect to 
a constant $c_{qu}>0$ if 
$$
  q_X\le \Delta_{\Omega,X}\le c_{qu}q_X,  
$$
where $q_X$ is the separation distance of $X$, defined by 
$$
 q_X=\frac{1}{2}\min_{i\neq j}|x^{(i)}-x^{(j)}|. 
$$
A typical example of quasi-uniform data sites is, of course, a set of 
uniformly spaced grid points. 
It is known that if $X$ is quasi-uniform with respect to $c_{qu}>0$,  
then there exists constants $c_1, c_2>0$, only depending on 
$d$ and $c_{qu}$, such that 
$$
  c_1N^{-1/d}\le q_X\le c_2N^{-1/d}. 
$$
\begin{ex}
\begin{enumerate}
\item In the case of $\Phi(x,y)=e^{-\alpha |x-y|^2}$, $\alpha>0$, 
it is known that 
$$
 |A_{\Phi,X}^{-1}|\le \frac{(2\alpha)^{d/2}}{\tilde{c}_{d,1}}q_X^d
  e^{40.71d^2/(\alpha q_X^2)} 
$$
where
\begin{equation*}
 \tilde{c}_{d,1}=\frac{1}{2\Gamma((d+2)/2)}\left(\frac{\tilde{c}_{d,2}}{\sqrt{8}}\right)^d, 
 \quad \tilde{c}_{d,2}=12\left(\frac{\pi\Gamma^2((d+2)/2)}{9}\right)^{1/(d+1)},  
\end{equation*}
and $\Gamma$ denotes the Gamma function (see \cite[Chapter 12]{wen:2010}). 
Thus, if $X$ is quasi-uniform, then 
$$
 L_N=|A_{\Phi,X}^{-1}|\le \frac{(2\alpha)^{d/2}}{\tilde{c}_{d,1}}c_2^2N^{-1}
  e^{40.71d^2N^{2/d}/(\alpha c_1^2)}.
$$
\item In the case of $\Phi(x,y)=(\alpha^2+|x-y|^2)^{-\beta}$, 
$\alpha, \beta>0$, it is known that 
$$
 |A_{\Phi,X}^{-1}|\le \tilde{c}_{d,\alpha,\beta} q_X^{\beta+d/2-1/2}
  \exp(2\alpha\tilde{c}_{d,2}/q_X) 
$$
with an explicitly known constant $\tilde{c}_{d,\alpha,\beta}$ 
(see \cite[Chapter 12]{wen:2010}). 
Thus, if $X$ is quasi-uniform, then 
$$
 L_N=|A_{\Phi,X}^{-1}|\le \tilde{c}_{d,\alpha,\beta} c_2^{\beta + d/2-1/2}
 N^{-(\beta+d/2-1/2)/d}e^{2\alpha\tilde{c}_{d,2}N^{1/d}/c_1}.
$$
\end{enumerate}
\end{ex}

To ensure the convergence of the interpolation at each time step, 
we impose the following conditions in view of (\ref{eq:2.4}): 
\begin{assum}
\label{assum:3.6}
\begin{enumerate}
\item The terminal data $f$ and the function $F(t_k,\cdot ; v^h(t_k,\cdot))$ belong to 
$\mathcal{N}_{\Phi}(\Omega)$ for every $k=0,\ldots,n-1$.  
\item The meshfree approximation at each time step is successful, i.e.,      
\begin{equation*}
 \Delta_{X,\Omega}^{\nu}\left(1+\max_{k=0,\ldots,n-1} 
  |F(t_k,\cdot ; v^h(t_k,\cdot))|_{\mathcal{N}_{\Phi}(\Omega)}\right) 
  \to 0, \quad h\to 0. 
\end{equation*}
\end{enumerate}
\end{assum}

To prove the convergence, we define $v^h(s,x)$ for $s\in (t_k, t_{k+1})$ 
by any continuous interpolation of 
$v^h(t_k,x)$ and $v^h(t_{k+1},x)$, $k=0,\ldots,n-1$. 
\begin{thm}
\label{thm:3.7}
Suppose that Assumptions \ref{assum:3.1}-\ref{assum:3.4}, \ref{assum:3.6} hold. 
Then we have   
\begin{equation*}
 \lim_{h\searrow 0, \; s\to t}\sup_{x\in\Omega}|v^{h}(s,x)-v(t,x)|=0.  
\end{equation*}
\end{thm}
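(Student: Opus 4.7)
The plan is to establish Theorem \ref{thm:3.7} via the Barles--Souganidis half-relaxed-limits framework, adapted to handle the lack of exact monotonicity of the collocation operator. Introduce
\[
 \overline{v}(t,x) = \limsup_{h\downarrow 0,\,(s,y)\to (t,x)} v^h(s,y), \qquad
 \underline{v}(t,x) = \liminf_{h\downarrow 0,\,(s,y)\to (t,x)} v^h(s,y),
\]
and aim to show that $\overline{v}$ is a bounded upper-semicontinuous viscosity subsolution and $\underline{v}$ a bounded lower-semicontinuous viscosity supersolution of (\ref{eq:1.1}). The comparison principle (Assumption \ref{assum:3.2}) then forces $\overline{v}\le \underline{v}$; since the reverse inequality is automatic, both agree with the unique continuous viscosity solution $v$, and coincidence of the upper and lower semicontinuous envelopes with a continuous function is equivalent to the asserted locally uniform convergence.

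I would first establish stability by iterating (\ref{eq:2.7}). From $|\xi(b)|+|\eta(b)|\le L_N|b|$ and the derivative bound on $\Phi$ and the $\pi_\ell$ encoded in $K_2$, one gets $\|D^{\alpha} v^h(t_k,\cdot)\|_{L^{\infty}(\Omega)} \le K_2 \sqrt{N} L_N |v_k|$ for $|\alpha|\le 2$. Combined with the growth estimate in Assumption \ref{assum:3.1}(iii) and a second round of interpolation bounds for $F(t_k,\cdot;v^h(t_k,\cdot))$, this yields a recursion $|v_k|\le |v_{k+1}| + Ch(1+\sqrt{N}L_N|v_k|)$. Discrete Gronwall produces a multiplicative constant of order $\exp(\sqrt{3}TK_1K_2(1+\sqrt{N})L_N)$, which by Assumption \ref{assum:3.4} is tame enough to keep $\|v^h\|_\infty$ uniformly bounded as $h\downarrow 0$; the factor $h^{\delta}\sqrt{N}L_N$ appearing in Assumption \ref{assum:3.4} is what absorbs the prefactors arising from the initial data $|v_n|\le C\|f\|_{\infty}$. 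Hence $\overline{v}$ and $\underline{v}$ are finite.

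The main obstacle is consistency. Let $\varphi$ be smooth and assume $\overline{v}-\varphi$ attains a strict local maximum at $(t_0,x_0)\in [0,T)\times \Omega$. Standard extraction yields $h_n\downarrow 0$ and local maximizers $(s_n,y_n)$ of $v^{h_n}-\varphi$ with $(s_n,y_n)\to(t_0,x_0)$ and $v^{h_n}(s_n,y_n)\to \overline{v}(t_0,x_0)$; by pushing $s_n$ to the nearest time-grid point one may assume $s_n=t_{k_n}$. Since $v^{h_n}(t_{k_n},\cdot)\in C^{2\nu}(\Omega)$, at the interior spatial maximum $y_n$ we have $Dv^{h_n}(t_{k_n},y_n)=D\varphi(t_{k_n},y_n)$ and $D^{2}v^{h_n}(t_{k_n},y_n)\le D^{2}\varphi(t_{k_n},y_n)$. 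Now evaluate (\ref{eq:2.7}) at $x=y_n$ (legitimate since (\ref{eq:2.7}) is an identity on all of $\Omega$), combine with the space-time maximum inequality $v^{h_n}(t_{k_n},y_n)-v^{h_n}(t_{k_n+1},y_n) \ge \varphi(t_{k_n},y_n)-\varphi(t_{k_n+1},y_n)$, divide by $h_n$, and absorb the interpolation error
\[
 \left|I_{F(t_{k_n},\cdot;v^{h_n}(t_{k_n},\cdot)),X}(y_n) - F(t_{k_n},y_n;v^{h_n}(t_{k_n},\cdot))\right| \le C_{\nu,\Phi}\Delta_{X,\Omega}^{\nu}\left|F(t_{k_n},\cdot;v^{h_n}(t_{k_n},\cdot))\right|_{\mathcal{N}_{\Phi}(\Omega)}
\]
into $o(1)$ using Assumption \ref{assum:3.6}(ii). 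The monotonicity of $F$ in its matrix argument (Assumption \ref{assum:3.1}(i)) replaces $D^{2}v^{h_n}$ by $D^{2}\varphi$ at the cost of a favourable inequality, and passing to the limit gives $-\partial_t\varphi(t_0,x_0) + F(t_0,x_0,\overline{v}(t_0,x_0),D\varphi,D^{2}\varphi)\le 0$. The delicate point when $\theta\in(0,1)$ is that the scheme also involves $F$ at the level $t_{k_n+1}$, where the Hessian of $v^{h_n}$ bears no \emph{a priori} relation to $D^{2}\varphi$; the remedy, in line with the strategy announced in the introduction, is to invoke Lemma 4.1 of Kohn--Serfaty to rewrite the $\theta$-scheme as an approximately monotone max-min scheme up to a negligible residual, which restores the needed comparison. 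The supersolution property of $\underline{v}$ follows symmetrically.

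For the terminal condition, (\ref{eq:2.6.5}) and the error estimate (\ref{eq:2.4}) together with Assumption \ref{assum:3.6} give $\sup_{x\in\Omega}|v^h(T,x)-f(x)|\le C_{\nu,\Phi}\Delta_{X,\Omega}^{\nu}|f|_{\mathcal{N}_{\Phi}(\Omega)}\to 0$, so $\overline{v}(T,\cdot)\le f\le \underline{v}(T,\cdot)$ on $\Omega$. Applying Assumption \ref{assum:3.2} to the pair $(\overline{v},\underline{v})$ then closes the argument and yields the uniform convergence stated.
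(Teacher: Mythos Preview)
Your overall framework---Barles--Souganidis half-relaxed limits plus the comparison principle---matches the paper's, but the two central steps have genuine gaps.

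\textbf{Stability.} The discrete Gronwall argument you sketch is essentially Lemma~\ref{lem:3.8}. Combined with Assumption~\ref{assum:3.4} it gives only $\sum_{|\alpha|_1\le 3}|D^{\alpha}v^h(t_k,x)|\le K_4 h^{-\delta}$ (this is Lemma~\ref{lem:3.9}(i)), \emph{not} a uniform bound on $v^h$. The inequality in Assumption~\ref{assum:3.4} reads $h^{\delta}\sqrt{N}L_N\exp(\cdots)\le K_3$, so the exponential factor may grow like $h^{-\delta}$; your claim that it ``keeps $\|v^h\|_\infty$ uniformly bounded'' does not follow. In the paper the true uniform bound (Lemma~\ref{lem:3.12}) is obtained only \emph{after} the Kohn--Serfaty max--min representation (Lemma~\ref{lem:3.10}) is available, by iterating the monotone max--min operator; it is not a consequence of crude Gronwall.

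\textbf{Consistency.} Your idea---use the smoothness of $v^{h_n}(t_{k_n},\cdot)$ to read off $Dv^{h_n}(t_{k_n},y_n)=D\varphi(t_{k_n},y_n)$, $D^2v^{h_n}(t_{k_n},y_n)\le D^2\varphi(t_{k_n},y_n)$ at the spatial maximizer, then invoke degenerate ellipticity of $F$---is genuinely different from the paper's route and is attractive in principle. But your identification of the difficult case is inverted. In the paper's convention the $\theta$-term carries $F$ at level $t_{k+1}$, so for $\theta=1$ (the explicit scheme, cf.\ Remark~\ref{rem:2.5}) the recursion involves \emph{only} $Dv^h(t_{k_n+1},\cdot)$ and $D^2v^h(t_{k_n+1},\cdot)$, and your derivative relations at time $t_{k_n}$ say nothing. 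Your proposed remedy, ``invoke Kohn--Serfaty for the $\theta$-term,'' is not a local patch: Lemma~\ref{lem:3.10} rewrites the full expression $\varphi^h(x)-hF(\,\cdot\,,D\varphi^h,D^2\varphi^h)$ as a max--min, and once you use it you have reproduced the paper's argument and rendered the derivative-comparison step redundant. The ingredient that would actually rescue your approach is Lemma~\ref{lem:3.9}(ii), the time-continuity estimate $\sum_{|\alpha|_1\le 2}|D^{\alpha}v^h(t_{k+1},x)-D^{\alpha}v^h(t_k,x)|\le K_4h^{1-2\delta}$: together with Assumption~\ref{assum:3.1}(ii) this forces $F(t_{k+1},y_n;v^h(t_{k+1},\cdot))-F(t_{k},y_n;v^h(t_{k},\cdot))\to 0$, after which your comparison at time $t_{k_n}$ handles every $\theta\in[0,1]$. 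You do not mention this estimate.

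For contrast, the paper never touches $Dv^h$ or $D^2v^h$ at the maximizer. It first absorbs the $(1-\theta)$-term into a remainder $R_3^m$ via Lemma~\ref{lem:3.9}(ii), then applies Lemma~\ref{lem:3.10} to $v^{h_m}(s_m+h_m,\cdot)$ to obtain a max--min that is monotone in function values, uses $v^{h_m}(s_m+h_m,y)\le \varphi(s_m+h_m,y)+c_m+h_m^{3/2}$ from the approximate space--time maximum~(\ref{eq:3.2}), and applies Lemma~\ref{lem:3.10} again to $\varphi$. This is why the paper can work with an \emph{approximate} maximizer at a grid time, whereas your route needs an exact spatial maximizer (so that the first- and second-order conditions hold pointwise).
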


\begin{rem}
\label{rem:3.8}
In the theorem above, 
in addition to Assumptions \ref{assum:3.1}-\ref{assum:3.4} and \ref{assum:3.6}, 
we have assumed $\Omega$ to be a bounded open subset of $\mathbb{R}^d$ 
and to satisfy an interior cone condition, and 
$X$ to be $\Pi_{m-1}(\mathbb{R}^d)$-unisolvent. 
All these conditions are satisfied when 
$\Omega$ is a star-shaped set and $X$ is a set of uniformly spaced grid 
points in $\Omega$ with $N\ge m$. 
\end{rem}

Notice that our approximation method is described in the form 
$v^h(t_k,x)=G^h(t_k, x, v^h(t_{k+1}))$. The arguments in \cite{bar-sou:1991} tell us that 
if the function $v^h$ is bounded with respect to $h$, 
the operator $G^h$ is monotone with respect to the last argument, 
and $G^h$ has a consistency property related to the equation \eqref{eq:1.1}, 
then we can basically show its convergence, i.e., Theorem \ref{thm:3.7}.  
In our situation, however, the monotonicity property is nontrivial since 
$G^h$ contains the derivative terms. 
We will overcome this difficulty by proving a variant of Lemma 4.1 in 
\cite{koh-ser:2010} (Lemma \ref{lem:3.10} below) in our case. 
This lemma means that $G^h$ has the monotonicity property with negligible term 
as well as the consistency property. Moreover, by an argument similar to that in the 
proof of this lemma, we can show the boundedness of $v^h$ (Lemma \ref{lem:3.12}).

We start with two lemmas (Lemmas \ref{lem:3.8} and \ref{lem:3.9}) to present 
estimation results for $v^h$. 
\begin{lem}
\label{lem:3.8}
Suppose that Assumptions \ref{assum:3.1}-\ref{assum:3.4} hold. 
Then, there exist $h_1\in (0,1)$ such that 
for $h\le h_1$  
\begin{equation*}
 \max_{k=0,\ldots,n-1}|v_{k}|
 \le \left(\sup_{x\in\Omega}|f(x)| + \frac{1}{\sqrt{2}K_2}\right)
  \exp(\sqrt{3}TK_1K_2\sqrt{N}(1+\sqrt{N})L_N). 
\end{equation*}
\end{lem}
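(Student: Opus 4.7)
The overall strategy is to show $|\tilde{F}_k(v_k)| \le A + B|v_k|$ with $A = \sqrt{N}K_1$ and $B = \sqrt{3}K_1K_2\sqrt{N}(1+\sqrt{N})L_N$, and then to close a discrete Gronwall-type recursion on $\{|v_k|\}$ obtained from the scheme (\ref{eq:2.6}). The stated upper bound will drop out after iterating from the terminal data $v_n = f|_X$.

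For the linear bound on $|\tilde{F}_k(v_k)|$, I would proceed in three steps. First, the linear system (\ref{eq:2.3}) with right-hand side $(v_k,0)^{\mathsf T}$, together with the operator-norm definition of $L_N$, gives $|\xi(v_k)|^2+|\eta(v_k)|^2 \le L_N^2|v_k|^2$. Second, expanding the derivatives of the interpolant as $D^\alpha v^h(t_k, x^{(i)}) = \sum_{j=1}^N \xi_j(v_k) D^\alpha \Phi(x^{(i)}, x^{(j)}) + \sum_{\ell=1}^Q \eta_\ell(v_k) D^\alpha \pi_\ell(x^{(i)})$, applying Cauchy-Schwarz on $\mathbb{R}^{N+Q}$, and using the definition of $K_2$ through $\sum_j |D^\alpha\Phi|^2 \le N K_2^2$ and $\sum_\ell |D^\alpha\pi_\ell|^2 \le K_2^2$ yields $|D^\alpha v^h(t_k, x^{(i)})| \le K_2(1+\sqrt{N})L_N|v_k|$ for every $|\alpha|\le 2$. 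Third, plugging into the growth condition from Assumption \ref{assum:3.1}(iii) with the three-term Cauchy-Schwarz $|z|+|p|+|\Gamma| \le \sqrt{3}(|z|^2+|p|^2+|\Gamma|^2)^{1/2}$ produces a bound on each $|\tilde{F}_{k,j}(v_k)|$; taking the Euclidean norm over $j=1,\ldots,N$ then delivers the claimed form.

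Next, I would plug into (\ref{eq:2.6}) and apply the triangle inequality to obtain $(1-h(1-\theta)B)|v_k| \le (1+h\theta B)|v_{k+1}| + hA$. Assumption \ref{assum:3.4}, together with $L_N$ bounded below and $\delta < 1/5$, implies $hB = O(h^{1-2\delta}) \to 0$, so there exists $h_1 \in (0,1)$ with $h(1-\theta)B \le 1/2$ for $h \le h_1$. Bounding $(1+h\theta B)/(1-h(1-\theta)B) \le \exp(hB(1+O(hB)))$ and iterating from $k=n$ down to $k$ via a geometric sum gives $|v_k| \le \exp(TB(1+o(1)))(|v_n|+A/B)$, with the $o(1)$ made arbitrarily small by shrinking $h_1$. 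The ratio $A/B = 1/(\sqrt{3}K_2(1+\sqrt{N})L_N)$ is bounded by $1/(\sqrt{2}K_2)$ using the lower bound on $L_N$, accounting for the additive constant; the factor $\sqrt{N}$ in $|v_n| \le \sqrt{N}\sup|f|$ is absorbed into the exponential, since $\ln\sqrt{N}$ is dominated by the exponent $TB$ given that $L_N$ is bounded below.

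The main obstacle is the bookkeeping of constants so that the exponent comes out precisely as $\sqrt{3}TK_1K_2\sqrt{N}(1+\sqrt{N})L_N$ and the additive term as $1/(\sqrt{2}K_2)$: this requires the tightest possible Cauchy-Schwarz splittings at each step (coefficient vector, derivative of $v^h$, pointwise value of $F$, sum over collocation points) and a careful choice of $h_1$ so that the Taylor remainder in $\ln(1+h\theta B) - \ln(1-h(1-\theta)B) \le hB + O((hB)^2)$ is negligible across the $n = T/h$ steps.
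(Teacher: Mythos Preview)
Your strategy is the same as the paper's: bound $|\tilde F_k(v_k)|\le A+B|v_k|$ from the interpolant representation and the growth condition on $F$, then run a discrete Gronwall on the scheme \eqref{eq:2.6}. The only differences are in how the constants are extracted.

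In the paper the coefficient in front of $|v_k|$ is obtained with $\sqrt{2}$ rather than $\sqrt{3}$: from $|\xi(v_k)|+|\eta(v_k)|\le\sqrt{2}\,L_N|v_k|$ one gets
\[
|\tilde F_k(v_k)|\le K_1\sqrt N+\sqrt{2}\,K_1K_2\sqrt N(1+\sqrt N)L_N\,|v_k|.
\]
Then the paper fixes $h_1$ so that $\sqrt{2}\,h(1-\theta)K_1K_2\sqrt N(1+\sqrt N)L_N\le 1-\sqrt 2/\sqrt 3$, which makes the amplification factor $(1-h(1-\theta)B')^{-1}$ exactly $\le\sqrt 3/\sqrt 2$; this is what converts $\sqrt 2$ into the $\sqrt 3$ appearing in the exponent of the lemma. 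Your route---getting $B=\sqrt 3\,K_1K_2\sqrt N(1+\sqrt N)L_N$ already from the three--term Cauchy--Schwarz and then carrying an additional $(1+o(1))$ from $(1+h\theta B)/(1-h(1-\theta)B)$---gives an exponent strictly larger than $TB$ for any fixed $h_1$ when $\theta<1$, so it does not reproduce the stated inequality verbatim. To match the statement you should aim for the smaller $\sqrt 2$ in the linear bound and let the denominator supply the missing $\sqrt 3/\sqrt 2$, exactly as the paper does.

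For the terminal step the paper simply takes $|v_n|\le\sup_{x\in\Omega}|f(x)|$ and iterates; your observation that the Euclidean norm satisfies $|v_n|\le\sqrt N\sup|f|$ is correct, but absorbing this $\sqrt N$ into the exponential again enlarges the exponent beyond what is stated. If you follow the paper's bookkeeping you do not introduce this extra factor.
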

\begin{proof} 
Since $|\xi(v_k)|+|\eta(v_k)|\le \sqrt{2}L_N|v_k|$ for any $k$, we have    
\begin{equation*}
 \sum_{i=0}^2|D^iv^h(t_k,x)| 
 \le \sqrt{2}K_2(1+\sqrt{N})L_N|v_k|. 
\end{equation*}
This and Assumption \ref{assum:3.1} imply 
\begin{equation*}
|\tilde{F}_{k,j}(v_k)|\le K_1+K_1\sum_{i=0}^2\left|
 D^{i}v^h(t_k,x^{(j)})\right|
 \le K_1+\sqrt{2}K_1K_2(1+\sqrt{N})L_N|v_k|.   
\end{equation*}
Using $|y|\le \sqrt{N}\max_{j=1,\ldots,N}|y_j|$ for 
$y=(y_1,\ldots,y_N)^{\mathsf{T}}\in\mathbb{R}^N$, 
we find that 
\begin{equation*}
 |\tilde{F}_k(v_k)|\le K_1\sqrt{N} + \sqrt{2}K_1K_2\sqrt{N}L_N|v_k| 
  + \sqrt{2}K_1K_2NL_N|v_k|. 
\end{equation*}
Hence, 
\begin{align*}
 |v_k|&\le |v_{k+1}| + h(1-\theta)|\tilde{F}_k(v_k)| 
  +h\theta|\tilde{F}_{k+1}(v_{k+1})| \\
  &\le \left(1+\sqrt{2}h\theta K_1K_2\sqrt{N}(1+\sqrt{N})L_N\right)|v_{k+1}| 
   + \sqrt{2}h(1-\theta)K_1K_2\sqrt{N}(1+\sqrt{N})L_N|v_k| +hK_1\sqrt{N}. 
\end{align*}
Assumption \ref{assum:3.4} implies 
$$
 h^{\delta}\sqrt{N}L_N(1+\sqrt{N})\le h^{\delta}\sqrt{N}L_N\frac{L_N}{C_0}(1+\sqrt{N}) 
 \le h^{\delta}\sqrt{N}L_N\frac{\exp(\sqrt{3}TK_1K_2(1+\sqrt{N})L_N)}{\sqrt{3}C_0TK_1K_2}
 \le C 
$$
for $h\le h_0$, where $C_0$ is a lower bound for $L_N$ . Thus, 
$$
 \sqrt{2}h(1-\theta)K_1K_2\sqrt{N}L_N(1+\sqrt{N})\le 1-\frac{\sqrt{2}}{\sqrt{3}}<1, 
 \quad h\le h_1 
$$
for some $h_1\le h_0$,  
it follows that for any $k=0,\ldots,n-1$, $h\le h_1$,  
\begin{align*}
|v_k|&\le \frac{1+\sqrt{2}h\theta K_1K_2\sqrt{N}(1+\sqrt{N})L_N}
 {1-\sqrt{2}h(1-\theta) K_1K_2\sqrt{N}(1+\sqrt{N})L_N}|v_{k+1}| 
  + \frac{hK_1\sqrt{N}}
  {1-\sqrt{2}h(1-\theta) K_1K_2\sqrt{N}(1+\sqrt{N})L_N} \\ 
  &= \left(1+\frac{\sqrt{2}h K_1K_2\sqrt{N}(1+\sqrt{N})L_N}
 {1-\sqrt{2}h(1-\theta) K_1K_2\sqrt{N}(1+\sqrt{N})L_N}\right)|v_{k+1}| 
  + \frac{hK_1\sqrt{N}}
  {1-\sqrt{2}h(1-\theta) K_1K_2\sqrt{N}(1+\sqrt{N})L_N} \\
 &\le (1+\sqrt{3}hK_1K_2\sqrt{N}(1+\sqrt{N}L_N))|v_{k+1}| 
  + \sqrt{3/2}hK_1\sqrt{N}.   
\end{align*}
Therefore, we have, for any $k$ 
\begin{align*}
 |v_k|&\le (1+\sqrt{3}hK_1K_2\sqrt{N}(1+\sqrt{N})L_N)^n\sup_{x\in\Omega}|f(x)| \\ 
  &\quad + \sqrt{3/2}hK_1\sqrt{N}\times 
  \frac{(1+\sqrt{3}hK_1K_2\sqrt{N}(1+\sqrt{N})L_N)^n -1}
   {\sqrt{3}hK_1K_2\sqrt{N}(1+\sqrt{N})L_N} \\  
  &\le \exp(\sqrt{3}TK_1K_2\sqrt{N}(1+\sqrt{N})L_N)\sup_{x\in\Omega}|f(x)| \\
   &\quad + \frac{1}{\sqrt{2}K_2(1+\sqrt{N})L_N}
     (\exp(\sqrt{3}TK_1K_2\sqrt{N}(1+\sqrt{N})L_N)-1),  
\end{align*}
leading to the conclusion of the lemma. 
\end{proof}

\begin{lem}
\label{lem:3.9}
Suppose that Assumptions $\ref{assum:3.1}$, $\ref{assum:3.3}$ 
and $\ref{assum:3.4}$ hold. 
Then there exist a constant $K_4\in (0,\infty)$, $h_2\in (0,1]$ such that 
for $h\le h_2$ we have the following: 
\begin{enumerate}[\rm (i)]
 \item $\sum_{|\alpha|_1\le 3}|D^{\alpha}v^h(t_k,x)|\le K_4h^{-\delta}$ 
  for $k=0,\ldots,n-1$ and $x\in\Omega$. 
 \item $\sum_{|\alpha|_1\le 3}|D^{\alpha}v^h(t_{k+1},x)-D^{\alpha}v^h(t_k,x)|\le K_4h^{1-2\delta}$ 
  for $k=0,\ldots,n-2$ and $x\in\Omega$. 
\end{enumerate}
\begin{proof}
Fix $k=0,\ldots,n-1$ and 
let $h_1$ be as in Assumption $\ref{assum:3.4}$.  
Using the previous lemma, we observe 
\begin{align*}
 \sum_{|\alpha|_1\le 3}|D^{\alpha}v^h(t_k,x)| 
 &\le \sqrt{2}K_2(1+\sqrt{N})L_N|v_k| \\ 
 &\le 2\left(1+\sqrt{2}K_2\sup_{x\in\Omega}|f(x)|\right)
 \sqrt{N}L_N\exp(\sqrt{3}TK_1K_2\sqrt{N}(1+\sqrt{N})L_N) \\
 &\le 2\left(1+\sqrt{2}K_2\sup_{x\in\Omega}|f(x)|\right)
  K_3h^{-\delta} 
\end{align*}
for $h\le h_1$. Thus the first assertion follows. 

Next, since $\xi(b)$ and $\eta(b)$ is linear in $b$, 
we obtain  
\begin{align*}
 \sum_{|\alpha|_1\le 3}|D^{\alpha}v^h(t_{k+1},x) -D^{\alpha}v^h(t_k,x)| 
 &\le \sqrt{2}K_2\sqrt{N}|\xi(v_{k+1})-\xi(v_k)| + \sqrt{2}K_2|\eta(v_{k+1})-\eta(v_k)| \\
 &\le \sqrt{2}K_2(1+\sqrt{N})L_N|v_{k+1}-v_k|. 
\end{align*}
Using Assumption \ref{assum:3.1} and the first assertion in this lemma, we see 
\begin{align*}
 |\tilde{F}_k(v_k)|&\le\left(\sum_{j=1}^NK_1^2\left(1+\sum_{i=0}^2
  |D^iv^h(t_k,x^{(j)})|\right)^2\right)^{1/2} 
  \le\sqrt{N}K_1\left(1+\sum_{i=0}^2\sup_{x\in\Omega}|D^iv^h(t_k,x)|\right) \\
  &\le C\sqrt{N}h^{-\delta}  
\end{align*}
for $h\le h_1$. Hence, 
\begin{equation*}
|v_{k+1}-v_k|\le h|\tilde{F}_k(v_k)| + h|\tilde{F}_{k+1}(v_{k+1})|
\le C\sqrt{N}h^{1-\delta}. 
\end{equation*}
Therefore, in view of Assumption \ref{assum:3.4},  
\begin{equation*}
 \sum_{|\alpha|_1\le 3}|D^{\alpha}v^h(t_{k+1},x) -D^{\alpha}v^h(t_k,x)|
 \le C(1+\sqrt{N})\sqrt{N}L_Nh^{1-\delta}\le Ch^{1-2\delta} 
\end{equation*}
for sufficiently small $h$. Thus the second assertion follows. 
\end{proof}
\end{lem}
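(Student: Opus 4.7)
The plan is to differentiate the interpolation representation in (\ref{eq:2.5.5}) term by term, substitute the a priori bound on $|v_k|$ from Lemma \ref{lem:3.8}, and then collapse the resulting factors of $\sqrt{N}$, $L_N$, and the exponential into the claimed $h^{-\delta}$ and $h^{1-2\delta}$ rates via Assumption \ref{assum:3.4}.

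For part (i), I would start from the identity
\begin{equation*}
 D^{\alpha} v^h(t_k, x) = \sum_{j=1}^N \xi_j(v_k) D^{\alpha}_x \Phi(x, x^{(j)}) + \sum_{\ell=1}^Q \eta_\ell(v_k) D^{\alpha} \pi_\ell(x).
\end{equation*}
Applying Cauchy--Schwarz together with the definition of $K_2$ to each sum, and using $|\xi(v_k)|^2 + |\eta(v_k)|^2 \le L_N^2 |v_k|^2$ (which follows from the invertibility of the system (\ref{eq:2.3})), I obtain the pointwise bound
\begin{equation*}
 \sum_{|\alpha|_1 \le 3} |D^{\alpha} v^h(t_k, x)| \le \sqrt{2}\, K_2 (1+\sqrt{N}) L_N |v_k|.
\end{equation*}
Lemma \ref{lem:3.8} turns $|v_k|$ into an expression of the form $(\sup_\Omega|f|+C)\exp(\cdots)$, and Assumption \ref{assum:3.4} then absorbs $\sqrt{N} L_N \exp(\cdots)$ into $K_3 h^{-\delta}$; the remaining prefactor $(1+\sqrt{N})/\sqrt{N} \le 2$ costs only a constant.

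For part (ii), I would exploit the linearity of the interpolation map $b \mapsto (\xi(b), \eta(b))$ to write $v^h(t_{k+1}, \cdot) - v^h(t_k, \cdot)$ as the interpolant of the data vector $v_{k+1} - v_k$. Repeating the estimate from (i) yields
\begin{equation*}
 \sum_{|\alpha|_1 \le 3} |D^{\alpha} v^h(t_{k+1}, x) - D^{\alpha} v^h(t_k, x)| \le \sqrt{2}\, K_2 (1+\sqrt{N}) L_N |v_{k+1} - v_k|.
\end{equation*}
The scheme (\ref{eq:2.6}) gives $|v_{k+1} - v_k| \le h|\tilde{F}_k(v_k)| + h|\tilde{F}_{k+1}(v_{k+1})|$, while the linear growth of $F$ from Assumption \ref{assum:3.1}(3) combined with the bound in (i) gives $|\tilde{F}_k(v_k)| \le C\sqrt{N}(1 + h^{-\delta}) \le C \sqrt{N} h^{-\delta}$ for small $h$. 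Substituting produces $|v_{k+1} - v_k| \le C \sqrt{N} h^{1-\delta}$, and a final invocation of Assumption \ref{assum:3.4} to bound $(1+\sqrt{N})\sqrt{N} L_N$ by $C h^{-\delta}$ yields the rate $C h^{1-2\delta}$.

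The main obstacle is the bookkeeping. Factors $\sqrt{N}$, $(1+\sqrt{N})$, and $L_N$ appear in several combinations alongside the exponential from Lemma \ref{lem:3.8}, and the proof hinges on assembling them into precisely the combination $h^{\delta}\sqrt{N} L_N \exp(\cdots)$ that is controlled by Assumption \ref{assum:3.4}. The lower bound on $L_N$ built into Assumption \ref{assum:3.4} is also needed implicitly to absorb prefactors like $(1+\sqrt{N})$ into the exponential when required. Analytically the argument is routine, but each application of Assumption \ref{assum:3.4} must be set up carefully so that the residual power of $h$ is exactly $h^{-\delta}$ in the first case and $h^{1-2\delta}$ in the second.
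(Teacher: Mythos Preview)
Your proposal is correct and follows essentially the same approach as the paper's proof: differentiate the interpolant, bound by $\sqrt{2}K_2(1+\sqrt{N})L_N|v_k|$, apply Lemma~\ref{lem:3.8} and Assumption~\ref{assum:3.4} for (i), then use linearity and the scheme~(\ref{eq:2.6}) together with the growth condition on $F$ for (ii). The bookkeeping you flag is exactly what the paper carries out, with no additional ideas required.
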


Let $K_4$ as in the previous lemma. 
For $h>0$ and $\kappa>0$ define 
\begin{equation*}
 \mathcal{D}_{h,\delta}=\left\{(p,\Gamma)\in\mathbb{R}^d\times 
  \mathbb{S}^d: |p|, |\Gamma|\le K_4h^{-\delta}\right\}, \quad 
 \mathcal{X}_{h,\kappa}=\left\{w\in\mathbb{R}^d: |w|\le h^{-\kappa}\right\}. 
\end{equation*}
The following lemma is a variant of Lemma 4.1 in \cite{koh-ser:2010}.  
\begin{lem}
\label{lem:3.10}
Suppose that Assumption $\ref{assum:3.1}$ holds. 
Let $O\subset\mathbb{R}^d$ be open and bounded. Then 
for any open ball $U$ compactly included in $O$ 
there exist $h_3\in (0,1]$, $\beta\in (0,\infty)$, 
$\kappa\in (0,\infty)$ such that 
for $(t,x,z)\in [0,T]\times U\times \mathbb{R}$,  
$\{\varphi^h\}_{h\in (0,h_3]}\subset C^3(O)$ with 
$\sum_{|\alpha|_1\le 3}\sup_{y\in O}|D^{\alpha}\varphi^h(y)|
\le K_4h^{-\delta}$, and $h\in (0,h_3]$, 
\begin{align*}
 &\bigg|\varphi^h(x)-hF(t,x,z,D\varphi^h(x),D^2\varphi^h(x)) \\
 & \quad -\sup_{(p,\Gamma)\in\mathcal{D}_{h,\delta}}\inf_{w\in\mathcal{X}_{h,\kappa}}
  \left[\varphi^h(x+\sqrt{h}w) 
 -\sqrt{h}w^{\mathsf{T}}p-\frac{h}{2}w^{\mathsf{T}}\Gamma w  
 -hF(t,x,z,p,\Gamma)\right]\bigg| \le C_{K_0,K_4}h^{1+\beta}.   
\end{align*}
\end{lem}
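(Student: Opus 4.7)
The plan is to follow the strategy of Kohn-Serfaty's Lemma 4.1, recognizing $(p,\Gamma,w)=(D\varphi^h(x),D^2\varphi^h(x),0)$ as a near-saddle of the bracketed expression and controlling the deviation by third-order Taylor expansion together with Assumption~\ref{assum:3.1}. Set $q=D\varphi^h(x)$, $M=D^2\varphi^h(x)$, and $A=\varphi^h(x)-hF(t,x,z,q,M)$, and note $(q,M)\in\mathcal{D}_{h,\delta}$ by hypothesis. The quantitative input is that for $|w|\le h^{-\kappa}$, Taylor's theorem together with $\sum_{|\alpha|_1=3}\sup_O|D^{\alpha}\varphi^h|\le K_4h^{-\delta}$ gives
\begin{equation*}
\varphi^h(x+\sqrt{h}w)=\varphi^h(x)+\sqrt{h}w^{\mathsf{T}}q+\tfrac{h}{2}w^{\mathsf{T}}Mw+R_h(w),\qquad |R_h(w)|\le C_{K_4}h^{3/2-\delta-3\kappa}.
\end{equation*}
I would take $\kappa\in(\delta/2,(1/2-\delta)/3)$, a nonempty window precisely because $\delta<1/5$ by Assumption~\ref{assum:3.4}.

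For the lower bound on the sup-inf, I would insert the admissible choice $(p,\Gamma)=(q,M)$ into the sup; the bracket then collapses via the Taylor identity to $A+R_h(w)$, so $\inf_{w\in\mathcal{X}_{h,\kappa}}[\cdots]\ge A-Ch^{3/2-\delta-3\kappa}$ and the sup inherits this bound, which is of the form $A-Ch^{1+\beta}$ for $\beta:=1/2-\delta-3\kappa>0$.

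For the matching upper bound, I would fix arbitrary $(p,\Gamma)\in\mathcal{D}_{h,\delta}$, set $\Delta p=q-p$, $\Delta\Gamma=M-\Gamma$, $G=F(t,x,z,q,M)-F(t,x,z,p,\Gamma)$, and exhibit $w^*\in\mathcal{X}_{h,\kappa}$ for which the bracket is at most $A+Ch^{1+\beta}$. After Taylor the excess over $A$ equals $\sqrt{h}(w^*)^{\mathsf{T}}\Delta p+\tfrac{h}{2}(w^*)^{\mathsf{T}}\Delta\Gamma w^*+R_h(w^*)+hG$. I would split on the spectrum of $\Delta\Gamma$ relative to a threshold $T(h)=CK_0h^{2\kappa-\delta}$ with $C$ an absolute constant: if $\Delta\Gamma$ has a negative eigenvalue of magnitude at least $T(h)$, take $w^*=h^{-\kappa}v$ with $v$ a corresponding unit eigenvector, signed so that $v^{\mathsf{T}}\Delta p\le 0$; the quadratic term then contributes $\le -\tfrac{1}{2}T(h)h^{1-2\kappa}\le -\tfrac{CK_0}{2}h^{1-\delta}$, enough to absorb $h|G|\le 2K_0h^{1-\delta}$ for $C$ large. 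Otherwise $\Delta\Gamma\ge -T(h)I$, and the monotonicity hypothesis Assumption~\ref{assum:3.1}(i) together with Lipschitzness yields the one-sided bound $G\le K_0|\Delta p|+CK_0^2h^{2\kappa-\delta}$; then $w^*=-\sqrt{h}K_0\Delta p/|\Delta p|$ (or $w^*=0$ if $\Delta p=0$), admissible for small $h$ because $\sqrt{h}K_0\le h^{-\kappa}$, produces a linear contribution $-hK_0|\Delta p|$ which cancels the first piece of $hG$, leaving residual errors of order $h^{1+2\kappa-\delta}$ plus $h^{3/2-\delta-3\kappa}$. Optimizing at $\kappa=1/10$ gives $\beta=\min(2\kappa-\delta,\,1/2-\delta-3\kappa)=1/5-\delta>0$.

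The hard part will be this upper-bound construction: $w^*$ must be chosen from the (unknown) spectral structure of $\Delta\Gamma$, and the monotonicity in Assumption~\ref{assum:3.1}(i) is indispensable in the complementary subcase to preclude an uncontrolled positive $G$ when $\Delta\Gamma$ has a large positive part. The exponent constraint $\delta<1/5$ from Assumption~\ref{assum:3.4} enters precisely at this balance, forcing $\delta/2<\kappa<(1/2-\delta)/3$ and ultimately $\beta=1/5-\delta$.
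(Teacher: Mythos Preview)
Your proposal is correct and follows essentially the same strategy as the paper: Taylor-expand to reduce to a quadratic-in-$w$ problem, obtain the lower bound by plugging in $(p,\Gamma)=(D\varphi^h(x),D^2\varphi^h(x))$, and for the upper bound split on whether $\Delta\Gamma$ has a sufficiently negative eigenvalue, using the eigenvector at full scale $|w|=h^{-\kappa}$ in one case and a short $w$ aligned with $\Delta p$ together with degenerate ellipticity in the other. The only differences are cosmetic---the paper uses threshold $h^{\gamma}$ with $\gamma\in(0,2\kappa-\delta)$ rather than your $CK_0h^{2\kappa-\delta}$, and takes $w=-h^{\delta}\Delta p/|\Delta p|$ (so the linear term \emph{dominates} $hK_0|\Delta p|$) rather than your $w^*=-\sqrt{h}K_0\Delta p/|\Delta p|$ (which \emph{exactly cancels} it); your bound $h|G|\le 2K_0h^{1-\delta}$ in the large-eigenvalue case should read $4K_0K_4h^{1-\delta}$, but this only affects the choice of $C$.
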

\begin{proof}
First, fix arbitrary $\varphi^h\in C^3(O)$ with 
$\sum_{|\alpha|_1\le 3}\sup_{y\in O}|D^{\alpha}\varphi^h(y)|
\le K_4h^{-\delta}$ 
and $(t,x,z)\in [0,T]\times U\times\mathbb{R}$. 
Then set $\varphi=\varphi^h$ and $p_0=D\varphi(x)$, $\Gamma_0=D^2\varphi(x)$. 
Also, for simplicity, we write $F(p,\Gamma)=F(t,x,z,p,\Gamma)$ for 
$(p,\Gamma)\in\mathcal{D}_{h,\delta}$. 
Since $\delta<1/5$, there exists $\varepsilon>0$ such that 
$\delta<1/(5+\varepsilon)$. Then define $\kappa>0$ by 
$$
  \kappa = \frac{1}{3}\left(\frac{5}{10+2\varepsilon}-\delta\right). 
$$

Next, take $h_3\in (0,1]$ such that $x+\sqrt{h}w\in O$ for all $x\in U$,  
$w\in\mathcal{X}_{h,\kappa}$, and $h\in (0,h_3]$. 
By Taylor expansion of $\varphi$ up to the second term, we have 
\begin{align*}
 &\sup_{(p,\Gamma)\in\mathcal{D}_{h,\delta}}\inf_{w\in\mathcal{X}_{h,\kappa}}
  \left[\varphi(x+\sqrt{h}w) 
 -\sqrt{h}w^{\mathsf{T}}p-\frac{h}{2}w^{\mathsf{T}}\Gamma w  
 -hF(p,\Gamma)\right] \\
 &\ge \varphi(x)-Ch^{-\delta+3/2-3\kappa} 
  + \sup_{(p,\Gamma)\in\mathcal{D}_{h,\delta}}\inf_{w\in\mathcal{X}_{h,\kappa}}
  \left[\sqrt{h}w^{\mathsf{T}}(p_0-p) 
   +\frac{h}{2}w^{\mathsf{T}}(\Gamma_0-\Gamma) w  
 -hF(p,\Gamma)\right]. 
\end{align*}
Then, considering $p=p_0$ and $\Gamma=\Gamma_0$, we find that 
the right-hand side in the above inequality is greater than 
$\varphi(x)-Ch^{1+\varepsilon/(10+2\varepsilon)} -hF(p_0,\Gamma_0)$. 

To show the reverse inequality, let $(p,\Gamma)\in\mathcal{D}_{h,\delta}$. 
Since $2\kappa-\delta=(5/3)(1/(5+\varepsilon)-\delta)>0$, we can take 
$\gamma\in (0,2\kappa-\delta)$. 
Suppose that the minimum eigenvalue of $\Gamma_0-\Gamma$ is
greater than or equal to $-h^{\gamma}$. Then $\Gamma\le \Gamma_0+h^{\gamma}I$ 
so that 
$$
F(p,\Gamma)\ge F(p,\Gamma_0+h^{\gamma}I)\ge F(p_0,\Gamma_0)
 -K_0|p-p_0|-K_0h^{\gamma}. 
$$
The last inequality follows from Assumption \ref{assum:3.1} (ii), 
i.e., the Lipschitz continuity of $F(p,\Gamma)$. 
Thus, 
\begin{equation}
\label{eq:3.1}
\begin{split}
 &\sqrt{h}(p_0-p)^{\mathsf{T}}w+\frac{h}{2}w^{\mathsf{T}}(\Gamma_0-\Gamma)w
  -hF(p,\Gamma) \\
 &\le \sqrt{h}(p_0-p)^{\mathsf{T}}w+K_4h^{1-\delta}|w|^2-hF(p_0,\Gamma_0) 
  +K_0h|p-p_0| + K_0h^{1+\gamma}.  
\end{split}
\end{equation}
In case $p=p_0$ we take $w=0$ so that the right-hand side in
 (\ref{eq:3.1}) becomes $-hF(p_0,\Gamma_0)+K_0h^{1+\gamma}$. 
Otherwise, by the choice $w=-h^{\delta}(p_0-p)/|p_0-p|$,  
the right-hand side in (\ref{eq:3.1}) becomes 
\begin{align*}
 &-h^{1/2+\delta}|p_0-p|+K_4h^{1+\delta}
  -hF(p_0,\Gamma_0) +K_0h|p_0-p|+K_0h^{1+\gamma} \\ 
 &=|p_0-p|(-h^{1/2+\delta}+K_0h) -hF(p_0,\Gamma_0)+K_4h^{1+\delta}
   +K_0h^{1+\gamma} \\
 &\le -hF(p_0,\Gamma_0)+(K_0+K_4)h^{1+\min{\{\gamma,\delta\}}}
\end{align*}
for any sufficiently small $h$ since there exists $h_3^{\prime}\in (0,h_3]$ such that 
$-h^{1/2+\delta}+K_0h\le 0$ for all $h\in (0,h_3^{\prime}]$. 

Suppose that the minimum eigenvalue $\mu$ of $\Gamma_0-\Gamma$ 
is less than $-h^{\gamma}$. Then take $w\neq 0$ as an eigenvector 
with respect to $\mu$ such that $(p_0-p)^{\mathsf{T}}w\le 0$ 
and $|w|=h^{-\kappa}$. 
This choice yields 
\begin{align*}
 &\sqrt{h}(p_0-p)^{\mathsf{T}}w +
 \frac{h}{2}w^{\mathsf{T}}(\Gamma_0-\Gamma)w -hF(p,\Gamma) \\
 &\le \frac{h}{2}\mu |w|^2 -hF(p_0,\Gamma_0) +hK_0
  (|p-p_0|+|\Gamma-\Gamma_0|) \\
 &\le -\frac{h}{2}h^{\gamma}h^{-2\kappa}-hF(p_0,\Gamma_0)+4K_0K_4h^{1-\delta} 
 \le -hF(p_0,\Gamma_0)+\frac{h^{-\delta}}{2}
  (-h^{1+\gamma-2\kappa+\delta}+8K_0K_4h), 
\end{align*}
and the right-hand side in the last inequality just above 
is at most $-hF(p_0,\Gamma_0)$ for any sufficiently small $h$ 
since there exists $h_3^{\prime\prime}\in (0,h_3^{\prime}]$ such that 
$-h^{1+\gamma-2\kappa+\delta}+8K_0k_4h\le 0$ for all 
$h\in (0,h_3^{\prime\prime}]$. 

Therefore, we have proved that for any $(p,\Gamma)\in\mathcal{D}_{h,\delta}$, 
\begin{equation*}
 \inf_{w\in\mathcal{X}_h}\left[\sqrt{h}(p_0-p)^{\mathsf{T}}w+
  \frac{h}{2}w^{\mathsf{T}}(\Gamma_0-\Gamma)w-hF(p,\Gamma)\right]
 \le -hF(p_0,\Gamma_0) +Ch^{1+\beta} 
\end{equation*}
for some $\beta=\beta_{\delta}$. 
Combining this with Taylor expansion of $\varphi$ up to the second term, 
we obtain 
\begin{equation*}
 \sup_{(p,\Gamma)\in\mathcal{D}_{h,\delta}}\inf_{w\in\mathcal{X}_{h,\kappa}}
  \left[\varphi(x+\sqrt{h}w)-\sqrt{h}p^{\mathsf{T}}w 
  -\frac{h}{2}w^{\mathsf{T}}\Gamma w-hF(p,\Gamma)\right]
 \le -hF(p_0,\Gamma_0) +Ch^{1+\beta}, 
\end{equation*}
which completes the proof of the lemma. 
\end{proof}

The function $v^h$ is actually locally bounded with respect to $x$ uniformly in $h$. 
\begin{lem}
\label{lem:3.12}
Under the assumptions imposed in Theorem \ref{thm:3.7}, 
for any open ball $O$ with $\overline{O}\subset\Omega$ 
there exist $h_4\in (0,1]$ such that 
\[
 \sup_{h\in (0,h_4]}\max_{k=0,\ldots,n}\sup_{x\in O}|v^h(t_k,x)|<\infty. 
\]
\end{lem}
\begin{proof}
Assumption \ref{assum:3.6} and $f\in C(\Omega)$ mean 
$$
|v^h(t_n,x)|\le B_n, \quad x\in O, \;\; h\in (0,1]
$$
for some positive constant $B_n$.  
So suppose that for $k\le n-1$ there exists $B_{k+1}>0$ such that 
$$
 |v^h(t_{k+1},x)|\le B_{k+1}, \quad x\in O, \;\; h\in (0,h_4]
$$
with some $h_4\in (0,1]$ to be determined below.

By a routine argument, we can extend each $v^h(t_i,\cdot)$ to a $C^3$-function on $\Omega$. Indeed, 
recall that there exists a $C^{\infty}$-function $\zeta$ compactly supported in $\Omega$ such that 
$0\le\zeta\le 1$ on $\Omega$ and $\zeta=1$ on $O$. See, e.g., 
Theorem 1.4.1 in H{\"o}rmander \cite{hor:1990}. 
Then, for $i=0,1,\ldots,n-1$ define $u^h(t_i,\cdot)\in C^3(\Omega)$ by 
\[
 u^h(t_i,x) = v^h(t_i,x)\zeta(x) + K_4h^{-\delta}(1-\zeta(x)), \quad x\in\Omega. 
\]
It is straightforward to see that  
$D^{\alpha}u^h(t_i,x)=D^{\alpha}v^h(t_i,x)$ for $x\in O$ and 
\[
 \sum_{|\alpha|_1\le 3}\sup_{x\in\mathbb{R}^d}|D^{\alpha}u^h(t_i,x)|\le K_5h^{-\delta}, \quad x\in\Omega, 
\]
for some positive constant $K_5$. By abuse of notation we still write $v^h$ for $u^h$.

To get a bound of $v^h(t_k,\cdot)$, rewrite $v^h(t_{k},x)$ as 
\begin{equation*}
\label{eq:3.3}
 v^{h}(t_k,x) 
  = v^{h}(t_{k+1},x)-hF(t_{k+1},x;v^{h}(t_{k+1},\cdot)) 
  +hR^{h}_1(x) + hR_2^h(x) + h R^h_3(x), 
\end{equation*}
where  
\begin{gather*}
 R^{h}_1(x)=(1-\theta)\left(F(t_k,x;v^{h}(t_k,\cdot)) - 
   I_{F(t_k,\cdot;v^{h}(t_k,\cdot)),X}(x)\right), \\ 
  R_2^h(x) = \theta\left(F(t_{k+1},x;v^{h}(t_{k+1},\cdot)) - 
    I_{F(t_{k+1},\cdot;v^{h}(t_{k+1},\cdot)),X}(x)\right), \\
  R_3^h(x) = (1-\theta)\left(F(t_{k+1},x;v^{h}(t_{k+1},\cdot)) 
    - F(t_k,x;v^{h}(t_k,\cdot))\right). 
\end{gather*}
Further, note that by Assumption \ref{assum:3.1}, 
\begin{equation}
\label{eq:3.1.5}
\begin{split}
&|F(t_{k+1},x;v^{h}(t_{k+1},\cdot)) - F(t_k,x;v^{h}(x,\cdot))|  \\
&\le |F_0(t_{k+1},x,v^{h}(t_{k+1},x)) - F_0(t_k,x,v^h(t_k,x))| \\
&\quad + K_0|Dv^{h}(t_{k+1},x)-Dv^{h}(t_k,x)| 
  +K_0|D^2v^{h}(t_{k+1},x)-D^2v^{h}(t_k,x)|.
\end{split}  
\end{equation}
Assumption \ref{assum:3.6}, Lemma \ref{lem:3.9} 
and (\ref{eq:3.1.5}) then guarantee that 
$\sum_{i=1}^3\sup_{x\in\Omega}|R^h_i(x)|$ is bounded with respect to $h$. 
Thus Lemma \ref{lem:3.10} yields  
$|v^h(t_k,x)|\le |Q| + Ch$ where $x\in O$ and 
\begin{equation*}
Q=\sup_{(p,\Gamma)\in\mathcal{D}_{h,\delta}}\inf_{w\in\mathcal{X}_{h,\kappa}}
  \left[\tilde{u}^h(t_{k+1},x+\sqrt{h}w) 
 -\sqrt{h}w^{\mathsf{T}}p-\frac{h}{2}w^{\mathsf{T}}\Gamma w  
 -hF(t,x,v^h(t_{k+1},x),p,\Gamma)\right]. 
\end{equation*}
Considering $p=0$ and $\Gamma=0$, we see 
$Q\ge -(1+K_1h)B_{k+1}-K_1h$. 

To obtain an upper bound, observe 
$$
 Q\le B_{k+1}+  
  \sup_{(p,\Gamma)\in\mathcal{D}_{h,\delta}}\inf_{w\in\mathcal{X}_{h,\kappa}}
   Q_{p,\Gamma ,w}, 
$$ 
where 
$$
Q_{p,\Gamma,w}= -\sqrt{h}w^{\mathsf{T}}p-\frac{h}{2}w^{\mathsf{T}}\Gamma w 
    -hF(t,x,v^h(t_{k+1},x),p,\Gamma).
$$ 
Then we will show that for any $(p,\Gamma)\in\mathcal{F}_{h,\delta}$ 
we can find $w\in\mathcal{X}_{h,\kappa}$ satisfying 
$Q_{p,\Gamma,w}\le K_1hB_{k+1} + Ch$. 
So fix $(p,\Gamma)\in\mathcal{F}_{h,\delta}$. 
First assume that the minimum eigenvalue of $-\Gamma$ is 
greater than or equal to $-h^{\gamma}$. 
If $p=0$ then we may take $w=0$, leading to 
$Q_{p,\Gamma,w}\le -hF(t,x,v^h(t_{k+1},x),0,h^{\gamma}I) 
\le K_1h+K_1hB_{k+1}+K_1h^{1+\gamma}$. 
Otherwise, take $w=h^{\delta}p/|p|$. Then we see 
\begin{align*}
Q_{p,\Gamma,w}&\le -h^{(1/2)+\delta}|p|+\frac{h^{1+\delta}}{2} + 
 K_1h(1+B_{k+1}) + K_1h|p| + K_1h^{1+\gamma} \\
 &\le |p|(-h^{(1/2)+\delta} + K_1h) +Ch+ K_1hB_{k+1} \le Ch+K_1hB_{k+1}
\end{align*}
since $-h^{(1/2)+\delta}+K_1h\le 0$ for $h\in (0, h_4^{\prime}]$ 
with some $h_4^{\prime}\in (0,h_3]$. 

Next assume that the minimum eigenvalue of $-\Gamma$ is less than 
$-h^{\gamma}$. Then take $w$ to be the corresponding eigenvector satisfying 
$-p^{\mathsf{T}}w\le 0$ and $|w|=h^{-\kappa}$. 
This choice leads to 
\begin{equation*}
 Q_{p,\Gamma,w}\le -\frac{h^{1+\gamma-2\kappa}}{2}+ K_1h(1+B_{k+1}) 
  +2K_1h^{1-\delta} \le K_1h(1+B_{k+1})
\end{equation*}
since there exists $h_4\in (0,h_4^{\prime}]$ such that 
$-h^{1+\gamma-2\kappa +\delta} + 4K_1h\le 0$ for 
$h\in (0,h_4]$. 

Therefore we deduce that $|Q|\le (1+K_1h)B_{k+1}+Ch$ for $h\le h_4$. 
Denoting the right-hand side by $B_k$, we obtain the sequence $\{B_k\}$ 
satisfying $B_k=(1+K_1h)B_{k+1}+Ch$. By a routine argument we have 
$B_k\le e^{TK_1}B_n + Ce^{TK_1}$ for all $k$. 
Thus the lemma follows. 
\end{proof}

\begin{proof}[Proof of Theorem \ref{thm:3.7}]
We adopt the viscosity solution method as stated in
 \cite{bar-sou:1991}. To this end, we set 
 $v^h(t,x)=v(t,x)$ for $(t,x)\in [0,T]\times (\mathbb{R}^d\setminus\Omega)$ 
 and consider 
\begin{equation*}
 \overline{v}(t,x)=\limsup_{{s\to t, \; y\to x}
  \atop{h\searrow 0}} v^{h}(s,y), 
 \quad (t,x)\in [0,T]\times\mathbb{R}^d   
\end{equation*}
to show that $\overline{v}$ is a viscosity subsolution of
 (\ref{eq:1.1}). 
 Lemma \ref{lem:3.12} implies that $\overline{v}$ is finite on 
 $[0,T]\times\mathbb{R}^d$. 

Fix $(t,x)\in [0,T)\times\Omega$ and 
let $\varphi\in C^3([0,T]\times\mathbb{R}^d)$ such that 
$\overline{v}-\varphi$ has a local maximum at $(t,x)$. 
Then, take $r>0$ such that 
$$
 (\overline{v}-\varphi)(s,y)\le (\overline{v}-\varphi)(t,x), \quad (s,y)\in B_r(t,x), 
$$
where $B_r(t,x)$ is the closed ball centered at $(t,x)$ with radius $r$, 
and that $B_r(t,x)\subset [0,T]\times\Omega$. 
Next, for $(s,y)\in B_r(t,x)$ set 
$$
 \tilde{\varphi}(s,y)=\varphi(s,y)-(\varphi(t,x)-\overline{v}(t,x)) 
 + |s-t|^2+|y-x|^2. 
$$
It follows that $\overline{v}(t,x)=\tilde{\varphi}(t,x)$ and that 
$(t,x)$ is a strict maximum of 
$\overline{v}-\tilde{\varphi}$ on $B_r(t,x)$. 
By abuse of notation, we write $\varphi$ for 
$\tilde{\varphi}$. 

By definition of $\overline{v}$, there exist $h_m$ and 
$(\tilde{s}_m,\tilde{y}_m)\in B_r(t,x)$ 
such that, as $m\to \infty$, 
$$
 h_m\to 0, \;\; (\tilde{s}_m,\tilde{y}_m)\to (t,x), \;\; 
 v^{h_m}(\tilde{s}_m,\tilde{y}_m)\to \overline{v}(t,x). 
$$
Take $s_m$ and $y_m$ so that $s_m=ih_m$ for some 
$i=i_m=0,\ldots,n-1$ and that 
\begin{equation}
\label{eq:3.2}
  (v^{h_m}-\varphi)(s_m,y_m)\ge \sup_{(s,y)\in B_r(t,x)}
 (v^{h_m}-\varphi)(s,y)-h_m^{3/2}. 
\end{equation} 
Moreover, the sequence $(s_m,y_m)$, $m\ge 1$, can be taken from the bounded set 
$B_r(t,x)$, so there exists a limit point 
$(\tilde{t},\tilde{x})\in B_r(t,x)$ possibly along a subsequence. 
Thus, denoting $c_m=(v^{h_m}-\varphi)(s_m,y_m)$, we have 
\begin{equation*}
 0=(\overline{v}-\varphi)(t,x)=\lim_{m\to\infty}(v^{h_m}-\varphi)
 (\tilde{s}_m,\tilde{y}_m)
  \le \liminf_{m\to\infty}c_m\le\limsup_{m\to\infty}c_m \\
  \le (\bar{v}-\varphi)(\tilde{t},\tilde{x}). 
\end{equation*}
Since $(t,x)$ is a strict maximum, we deduce that 
$(\tilde{t},\tilde{x})=(t,x)$.  
Therefore, it follows that $(s_m,y_m)\to (t,x)$ and $c_m\to 0$. 
By (\ref{eq:3.2}), for any $y$ near x, 
\begin{equation}
\label{eq:3.2.3}
 \varphi(s_m+h_m,y)+c_m+h_m^{3/2}\ge v^{h_m}(s_m+h_m,y). 
\end{equation}

Now, by Lemma \ref{lem:3.9}, we have  
\begin{equation*}
 \sum_{i=0}^2|D^iv^{h_m}(s_m+h_m,y_m)-D^iv^{h_m}(s_m,y_m)|\to 0, 
\end{equation*}
as $m\to\infty$. 
In particular, 
\begin{equation*}
 \lim_{m\to\infty}v^{h_m}(s_m+h_m,y_m)=\lim_{m\to\infty}v^{h_m}(s_m,y_m)
 =\varphi(t,x). 
\end{equation*}
Also, by Assumption \ref{assum:3.1}, 
\begin{equation}
\label{eq:3.2.5}
\begin{split}
&|F(s_m+h_m,y_m;v^{h_m}(s_m+h_m,\cdot)) - F(s_m,y_m;v^{h_m}(s_m,\cdot))| \\
&\le |F_0(s_m+h_m,y_m,v^{h_m}(s_m+h_m,y_m)) - F_0(t,x,\varphi(t,x))| \\
& \quad + |F_0(t,x,\varphi(t,x)) - F_0(s_m,y_m,v^{h_m}(s_m,y_m))| \\
&\quad + K_0|Dv^{h_m}(s_m+h_m,y_m)-Dv^{h_m}(s_m,y_m)| 
  +|D^2v^{h_m}(s_m+h_m,y_m)-D^2v^{h_m}(s_m,y_m)|.
\end{split}  
\end{equation}
As in the proof of Lemma \ref{lem:3.12}, rewrite $v^h(s_m,y_m)$ as 
\begin{equation}
\label{eq:3.3}
\begin{split}
 v^{h_m}(s_m,y_m) 
  &= v^{h_m}(s_m+h_m,y_m)
  -h_mF(s_m+h_m,y_m;v^{h_m}(s_m+h_m,\cdot)) \\
   &\quad +h_mR^{m}_1 + h_m R_2^m + h_mR^m_3, 
\end{split}
\end{equation}
where  
\begin{gather*}
 R^{m}_1=(1-\theta)\left(F(s_m,y_m;v^{h_m}(s_m,\cdot)) - 
   I_{F(s_m,\cdot;v^{h_m}(s_m,\cdot)),X}(y_m)\right), \\ 
  R_2^m = \theta\left(F(s_m+h_m,y_m;v^{h_m}(s_m+h_m,\cdot)) - 
    I_{F(s_m+h_m,\cdot;v^{h_m}(s_m+h_m,\cdot)),X}(y_m)\right), \\
  R_3^m = (1-\theta)\left(F(s_m+h_m,y_m;v^{h_m}(s_m+h_m,\cdot)) 
    - F(s_m,y_m;v^{h_m}(s_m,\cdot))\right). 
\end{gather*}
Assumption \ref{assum:3.6}, Lemma \ref{lem:3.9} 
and (\ref{eq:3.2.5}) guarantee 
$R^m_1, R_2^m, R_3^m\to 0$ as $m\to\infty$. 
With the representation (\ref{eq:3.3}), we apply Lemma \ref{lem:3.10} for the family 
$\{v^{h}(s_m+h,\cdot), \varphi(s_m+h,\cdot)\}_{h\in (0,1], m\ge 1}$   
and use the inequality (\ref{eq:3.2.3}) to get, for any sufficiently large $m$,  
\begin{align*}
 &v^{h_m}(s_m,y_m) \\
 &\le 
  \sup_{(p,\Gamma)\in\mathcal{D}_{h_m,\delta}}
  \inf_{w\in\mathcal{X}_{h_m,\kappa}}\Big[
 v^{h_m}(s_m+h_m,y_m+\sqrt{h_m}w)-\sqrt{h_m}p^{\mathsf{T}}w 
 -\frac{h_m}{2}w^{\mathsf{T}}\Gamma w \\
 &\qquad -h_mF(s_m+h_m,y_m,v^{h_m}(s_m+h_m,y_m),p,\Gamma)\Big] 
  +h_mR^{m}_1 + h_mR_2^m + h_mR_3^m 
   + Ch_m^{1+\beta} \\
 &\le \sup_{p,\Gamma}\inf_w\Big[
  \varphi(s_m+h_m,y_m+\sqrt{h_m}w)-\sqrt{h_m}p^{\mathsf{T}}w 
 -\frac{h_m}{2}w^{\mathsf{T}}\Gamma w \\
 &\qquad -h_mF(s_m+h_m,y_m,v^{h_m}(s_m+h_m,y_m),p,\Gamma)\Big] 
  +c_m+h_m^{3/2}+h_mR^{m}_1 
   + h_m R_2^m+h_m R_3^m + Ch_m^{1+\beta} \\
 &\le \varphi(s_m+h_m,y_m) 
         -h_mF(s_m+h_m,y_m,v^{h_m}(s_m+h_m,y_m),
     D\varphi(s_m+h_m,y_m),D^2\varphi(s_m+h_m,y_m)) \\
 &\qquad +c_m+h_m^{3/2}+h_m R^{m}_1 
    +h_mR_2^m+h_mR_3^m + Ch_m^{1+\beta}. 
\end{align*}
This and $v^{h_m}(s_m,y_m)=c_m+\varphi(s_m,y_m)$ imply 
\begin{equation}
\label{eq:3.4}
\begin{split}
 &-\frac{1}{h_m}\left(\varphi(s_m+h_m,y_m)-\varphi(s_m,y_m)\right) \\
 &\quad +F(s_m+h_m,y_m,v^{h_m}(s_m+h_m,y_m),D\varphi(s_m+h_m,y_m),
    D^2\varphi(s_m+h_m,y_m))
 \le o(1)
\end{split} 
\end{equation}
for any sufficiently large $m$.  
Letting $m\to\infty$, we arive at 
\begin{equation}
\label{eq:3.5}
 -\partial_t\varphi(t,x)+F(t,x,\overline{v}(t,x),D\varphi(t,x),D^2\varphi(t,x)) 
 \le 0. 
\end{equation}
Thus the subsolution property at $(t,x)$ follows. 

In the case $(t,x)\in\{T\}\times\mathbb{R}^d$, from 
the definition of $v^{h}$ and Assumption \ref{assum:3.6} we have 
$\overline{v}(t,x)=f(x)$. 
Thus the subsolution property immediately follows. 

Next consider the case $(t,x)\in [0,T)\times\partial\Omega$. 
As in the first part of the proof, we can take the sequence 
$(h_m,s_m,y_m)$, $m\ge 1$, satisfying (\ref{eq:3.2.3}) and 
$(s_m,y_m)\to (t,x)$. Moreover, 
$$
 v^{h_m}(s_m,y_m)=c_m+\varphi(s_m,y_m)\to \varphi(t,x)
 =\overline{v}(t,x). 
$$
Then, if there exists $m_0\ge 1$ such that 
$y_m\in\mathbb{R}^d\setminus\Omega$ for all $m\ge m_0$, we see 
$$
 v^{h_m}(s_m,y_m)=v(s_m,y_m)\to v(t,x), 
 \quad m\to\infty. 
$$
Thus the subsolution property follows. 
Otherwise, there exists a subsequence $\{y_{m_j}\}$ such that 
$y_{m_j}\in\Omega$ and $y_{m_j}\to x$, $j\to\infty$. 
With this sequence we obtain the inequality (\ref{eq:3.4}) 
with $(h_m,s_m,y_m)$ replaced by 
$(h_{m_j},s_{m_j},y_{m_j})$. 
Then letting $j\to\infty$, we obtain  
(\ref{eq:3.5}) at $(t,x)\in [0,T)\times\partial\Omega$.

By similar arguments, we can show that 
$$
\underline{v}(t,x)=\liminf_{{s\to t,\; y\to x}
 \atop{h\searrow 0}}v^{h}(s,y), 
\quad (t,x)\in [0,T]\times\mathbb{R}^d
$$
is a viscosity supersolution of (\ref{eq:1.1}). 
The comparison principle now implies that $\overline{v}\le \underline{v}$. 
However, by definition, $\overline{v}\ge \underline{v}$. 
Hence we obtain $\overline{v}=\underline{v}$, as asserted. 
\end{proof}

\section{A numerical example}\label{sec:4}

Here we consider the following two-dimensional 
deterministic KPZ equation
\begin{equation*}
\left\{
\begin{split}
 &\partial_t v + \frac{1}{2}\mathrm{tr}(D^2v) 
  +\frac{1}{2}\left|Dv\right|^2=0, \\
 & v(1,x)=f(x). 
\end{split}
\right. 
\end{equation*}
By Cole-Hopf transformation (see, e.g., Evans \cite{eva:1998}), the unique solution is 
represented as  
\begin{equation*}
 v(t,x)=\log\mathbb{E}\left[\exp\left( 
  f(x+W_{1-t})\right)\right], \quad (t,x)\in
 [0,1]\times\mathbb{R}^2, 
\end{equation*}
where $\{W_t\}_{0\le t\le  1}$ is a $2$-dimensional standard Brownian motion 
and $\mathbb{E}$ is the expectation operator on a probability space. 

We consider the case of the terminal data given by
\begin{equation*}
 f(x_1,x_2)=\cos(x_1)\cos(x_2)
\end{equation*}
and compute the solution in $\{0\}\times [-\pi/4,\pi/4]^2$ 
by our collocation method with $\theta=1$ and Gaussian RBF. 
We examine both the uniformly spaced grids and the Halton sequence 
on $[-\pi/2,\pi/2]^2$ consisting of $N$ points for 
the set $X$ of the data sites. 
Notice that we take the larger region $[-\pi/2,\pi/2]^2$ 
to expect a better performance near the boundary of $[-\pi/4,\pi/4]^2$. 
The adjustable parameter $\alpha$ for the kernel is set as 
$\alpha=1/\varepsilon^2$ where $\varepsilon$ is 
the Euclidean norm between the $N$ points in $[-\pi/2,\pi/2]^2$. 
As the benchmark, the exact solution $v(0,x)$ is estimated  
by the Monte-Carlo method with $10^6$ samples. 
Table \ref{table:4.1} shows the resulting root mean square errors and 
the maximum errors, defined by
\begin{equation*}
 \text{RMS error}=\sqrt{\frac{1}{625}\sum_{x\in X_0} 
 \left|v^{h}(0,x)-v(0,x)\right|^2}, \quad 
 \text{Max error} = \max_{x\in X_0}\left|v^{h}(0,x)-v(0,x)\right|,  
\end{equation*} 
respectively, where $X_0$ is the set of evaluation points consisting of 
$25^2$ uniformly spaced points in $[-\pi/4,\pi/4]^2$. 
\begin{table}[htb]
\begin{center}
\begin{tabular}[t]{cccccc} 
\toprule 
\addlinespace[3pt]
 & & \multicolumn{2}{c}{uniform} & \multicolumn{2}{c}{Halton}\\ \cmidrule(r){3-6}
 $N$  &  $h$ & Max error & RMS error & Max error & RMS error \\[1pt] \midrule 
 9  & 0.04 & 6.3177e-002 & 5.3287e-002 & 8.6257e-002 & 4.8638e-002  \\
     &  0.02 & 5.4872e-002 & 4.7082e-002 & 8.9769e-002 & 5.2891e-002 \\
      & 0.01  & 5.0207e-002 & 4.3563e-002 & 9.0458e-002 & 5.5374e-002 \\ \midrule
 16 & 0.04  & 3.4885e-003 & 1.2442e-003 & 5.2929e-002 & 2.0522e-002 \\ 
      & 0.02  & 9.2939e-003 & 7.3882e-003 & 5.5556e-002 & 2.4998e-002 \\  
      & 0.01  & 1.3321e-002 & 1.0278e-002 & 5.6317e-002 & 2.7392e-002 \\ \midrule 
 25 &  0.04 & 1.3885e-002 & 9.1823e-003 & 1.1283e-002 & 6.2947e-003 \\
      & 0.02 & 5.8901e-003 & 3.2270e-003 & 1.4613e-002 & 6.5674e-003 \\ 
      & 0.01  & 3.8536e-003 & 1.6292e-003 & 1.6812e-002 & 8.5034e-003 \\ \bottomrule  
\end{tabular}
\caption{RMS and Max errors in the cases of the uniformly spaced and Halton points 
for various choices of $N$ and $h$. }
\label{table:4.1}
\end{center}
\end{table}

\begin{figure}[htbp]
\centering
\subfigure{\includegraphics[width=0.45\columnwidth, bb = 0 0 600 400]{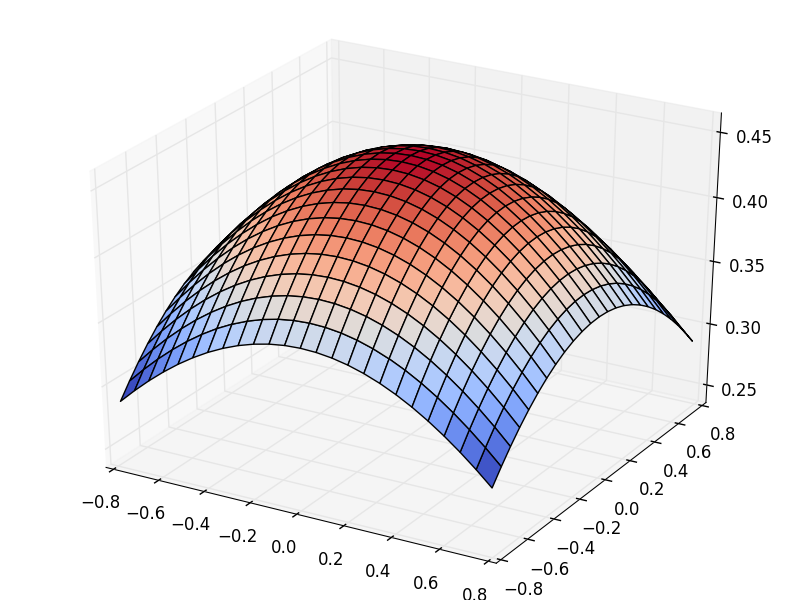}}
\subfigure{\includegraphics[width=0.45\columnwidth, bb = 0 0 600 400]{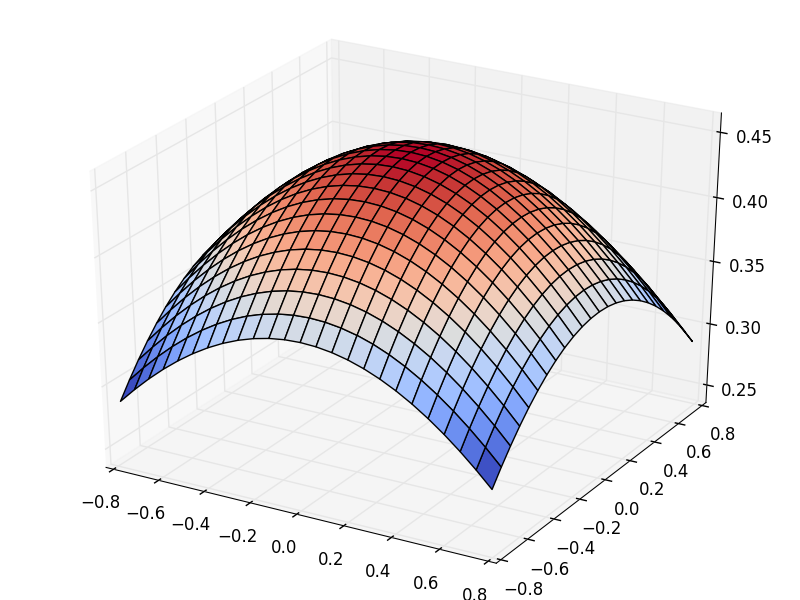}}
\caption{The analytical solution (left) and the numerical solution (right) with 
$h=10^{-2}$ and $N=25$ uniformly spaced points.}
\label{fig:4.1}
\end{figure}

\subsection*{Acknowledgements}

The author is thankful to the anonymous referees for their useful comments for  
previous versions of this paper. 
This study is partially supported by JSPS KAKENHI Grant Number 26800079.

\bibliographystyle{plain}
\bibliography{../mybib}

\end{document}